\title{Abelian quotients of subgroups of the mapping class group and higher Prym representations}
\author{Andrew Putman\footnote{Supported in part by NSF grant DMS-1005318}\ \ and Ben Wieland}
\theoremstyle{plain}
\newtheorem{theorem}{Theorem}[section]
\newtheorem{maintheorem}{Theorem}
\newtheorem{lemma}[theorem]{Lemma}
\newtheorem{corollary}[theorem]{Corollary}
\newtheorem{conjecture}[theorem]{Conjecture}
\newtheorem{step}{Step}
\newcommand\BeginClaims{\setcounter{claim}{0}}
\newcommand\BeginSteps{\setcounter{step}{0}}
\theoremstyle{definition}
\newtheorem{claim}{Claim}
\newtheorem*{definition}{Definition}
\theoremstyle{remark}
\newtheorem*{remark}{Remark}
\DeclareMathOperator{\Hom}{Hom}
\DeclareMathOperator{\Mod}{Mod}
\DeclareMathOperator{\Sp}{Sp}
\DeclareMathOperator{\SL}{SL}
\newcommand\Curves{\ensuremath{\mathcal{C}}}
\newcommand\CNosep{\ensuremath{\mathcal{CNS}}}
\newcommand\Z{\ensuremath{\mathbb{Z}}}
\newcommand\Q{\ensuremath{\mathbb{Q}}}
\DeclareMathOperator{\HH}{H}
\DeclareMathOperator{\HHH}{\hat{H}}
\DeclareMathOperator{\Aut}{Aut}
\DeclareMathOperator{\Interior}{Int}
\newcommand\CaptionSpace{\hspace{0.2in}}
\newcommand\Set[2]{\ensuremath{\{\text{#1 $|$ #2}\}}}
\DeclareMathOperator{\End}{End}
\newcommand\Figure[3]{
\begin{figure}[t]
\centering
\centerline{\psfig{file=#2,scale=60}}
\caption{#3}
\label{#1}
\end{figure}}
\DeclareMathOperator{\Ivanov}{NVSZ}
\DeclareMathOperator{\Action}{NFO}
\begin{document}

\maketitle

\begin{abstract}
A well-known conjecture asserts that the mapping class group
of a surface (possibly with punctures/boundary) does not 
virtually surject onto $\Z$ if the genus of the surface is large.  
We prove that if this conjecture holds for some genus, then it 
also holds for all larger genera.  We also prove that if there
is a counterexample to this conjecture, then there must
be a counterexample of a particularly simple form.  We prove
these results by relating the conjecture to a family of
linear representations of the mapping class group that we
call the higher Prym representations.  They generalize
the classical symplectic representation.  
\end{abstract}

\section{Introduction}
\label{section:introduction}

Let $\Sigma_{g,n}^p$ be an orientable genus $g$ surface with $n$ boundary components and $p$ punctures
and let $\Mod_{g,n}^p$ be the {\em mapping class group} of $\Sigma_{g,n}^p$.  This is the group
of orientation-preserving homeomorphisms of $\Sigma_{g,n}^p$ that fix the boundary and punctures pointwise
modulo isotopies that fix the boundary and punctures pointwise.  The group $\Mod_{g,n}^p$ plays
an important role in areas ranging from low-dimensional topology to algebraic geometry.  
See \cite{FarbMargalitBook} for a survey.

There is a fruitful analogy between the mapping class group and lattices
in higher rank Lie groups.  Such lattices satisfy Kazhdan's property (T), and as a consequence
they do not virtually surject onto $\Z$.  In other words, if $\Gamma$ is a finite-index subgroup
of a lattice in a higher-rank Lie group, then $\HH_1(\Gamma;\Q) = 0$ (see, e.g.,\ 
\cite[Theorems 7.1.4 and 7.1.7]{ZimmerBook}).  The starting point for this paper
is the following well-known conjecture of Ivanov (see, e.g.,\ \cite[Problem 2.11.A]{KirbyList} and
\cite[\S 7]{IvanovProblems}).  It asserts that something similar happens for $\Mod_{g,n}^p$.  See
the end of this introduction for a summary of the previous literature on it.

\begin{conjecture}
\label{conjecture:ivanov}
For some $g \geq 3$ and $n,p \geq 0$, let $\Gamma < \Mod_{g,n}^p$ be a finite-index subgroup.  Then
$\HH_1(\Gamma;\Q) = 0$.
\end{conjecture}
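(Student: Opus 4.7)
The plan is to prove $\HH_1(\Gamma;\Q) = 0$ for every finite-index $\Gamma < \Mod_{g,n}^p$ with $g \geq 3$ by filtering $\Mod_{g,n}^p$ through the Johnson filtration and showing that each successive quotient contributes no equivariant abelianization. After replacing $\Gamma$ with its normal core I may assume $\Gamma$ is normal, with finite quotient $Q := \Mod_{g,n}^p / \Gamma$, so that $\HH_1(\Gamma;\Q)$ is a finite-dimensional $\Q[Q]$-module whose vanishing is the goal.

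First, I would eliminate boundary components and punctures via the Birman sequences
\[
1 \to \pi_1(\Sigma_{g,n}^{p-1}) \to \Mod_{g,n}^p \to \Mod_{g,n}^{p-1} \to 1
\]
and their capping analogues. Intersecting with $\Gamma$ and running the five-term exact sequence in group homology exhibits $\HH_1(\Gamma;\Q)$ as an extension of a quotient of $\HH_1$ of the image in the smaller mapping class group by a quotient of $\HH_1(\Gamma \cap \pi_1;\Q)_\Gamma$. For $g \geq 2$ the restriction of the symplectic action on $\Q^{2g}$ to any finite-index subgroup of $\Sp(2g,\Z)$ remains irreducible (by arithmeticity), so these coinvariants vanish and iteration reduces to closed $\Sigma_g$. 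Next, apply $\rho : \Mod_g \twoheadrightarrow \Sp(2g,\Z)$. Because $\Sp(2g,\Z)$ has Kazhdan's property (T) for $g \geq 2$, one has $\HH_1(\rho(\Gamma);\Q) = 0$, and the five-term sequence for $1 \to \Torelli_g \cap \Gamma \to \Gamma \to \rho(\Gamma) \to 1$ expresses $\HH_1(\Gamma;\Q)$ as a quotient of the coinvariants $\HH_1(\Torelli_g \cap \Gamma;\Q)_{\rho(\Gamma)}$. Johnson's calculation $\HH_1(\Torelli_g;\Q) \cong (\bigwedge^3 H_\Q)/H_\Q$ is an irreducible $\Sp(2g,\Q)$-module without $\Sp$-invariants for $g \geq 3$, killing the ``classical'' Johnson contribution.

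The remaining classes are detected by the Johnson kernel $\mathcal{K}_g$ and deeper terms of the Johnson filtration of $\Torelli_g$. Here I would invoke the higher Prym representations: the finite quotient $Q$ determines, up to passing to a cofinal cover, a characteristic finite cover $\pi : \widetilde{\Sigma} \to \Sigma_g$ on which $\Gamma$ acts, and $\HH_1(\widetilde{\Sigma};\Q)$ decomposes into $Q$-isotypic pieces that assemble into the higher Prym modules. The program is to show that every class of $\HH_1(\Gamma;\Q)$ arising from deeper Johnson terms is detected by the $\Gamma$-action on some higher Prym component, and then to use $\Sp$-equivariance together with the absence of $\Sp$-invariants in these Prym modules for $g \geq 3$ to force each such class to vanish.

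The main obstacle is this last step. Beyond Johnson's original theorem there is no explicit description of the deeper Johnson graded pieces, and no existing mechanism guarantees that every virtual abelianization of $\Mod_g$ factors through a higher Prym construction. Proving such a rigidity theorem — essentially, that every abelianization of a finite-index subgroup of $\Mod_g$ is of ``covering origin'' — is where the plan meets the central open difficulty, and is the piece of content that the higher Prym machinery would have to supply to close the argument.
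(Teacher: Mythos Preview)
The statement you are trying to prove is a \emph{conjecture}; the paper does not prove it, and it remains open. What the paper establishes is that $\Ivanov(g,n,p)$ follows from the separate Conjecture~\ref{conjecture:actionconjecture} about higher Prym representations (and conversely), via curve-complex arguments and Boggi's theorem on the rational homology of $\Curves_{g,n}^p/\Gamma$. So any unconditional proof would resolve a well-known open problem.

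More concretely, your plan has a genuine gap already at the very first reduction, before you ever reach the Johnson filtration. When you run the five-term sequence for the Birman extension restricted to $\Gamma$, the kernel is $K := \Gamma \cap \pi_1(\Sigma_{g,n}^{p-1})$, a \emph{finite-index} subgroup of the surface group. Thus $\HH_1(K;\Q)$ is the rational homology of a finite cover of $\Sigma_{g,n}^{p-1}$, not $\Q^{2g}$; it is typically much larger, and the $\Gamma$-action on it is exactly a higher Prym representation. Your appeal to irreducibility of $\Q^{2g}$ under a finite-index subgroup of $\Sp_{2g}(\Z)$ is therefore beside the point: it controls only the quotient $\HH_1(K;\Q)\to\HH_1(\pi_1;\Q)$, not the full module. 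Showing $(\HH_1(K;\Q))_\Gamma = 0$, or equivalently $(V_K)_\Gamma = 0$, is precisely the content of Conjecture~\ref{conjecture:actionconjecture}; the paper proves this equivalence (Lemma~\ref{lemma:fillpuncture} and \S\ref{section:construction}) but does not prove the vanishing itself. In other words, the ``easy'' step you use to strip off punctures is already the whole problem, and the difficulty you later identify in the deeper Johnson terms is a second manifestation of the same unresolved issue rather than a new one.

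By contrast, the paper's conditional argument does not pass through the Johnson filtration at all: assuming Conjecture~\ref{conjecture:actionconjecture}, it uses Theorem~\ref{theorem:boggicurvecpx} and the equivariant spectral sequence to show that $\HH_1(\Gamma;\Q)$ is generated by curve stabilizers, reduces those stabilizers to lower-genus mapping class groups via the exact sequences of \S\ref{section:surfacefunctor}, and kills the point-pushing contribution using the assumed conjecture (Lemma~\ref{lemma:fillpuncture}).
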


\begin{remark}
J.\ Anderson \cite{AndersonT} recently proved that $\Mod_{g,n}^p$ does {\em not} satisfy Kazhdan's property (T) 
for $g \geq 2$.
\end{remark}

\begin{remark}
McCarthy \cite{McCarthyCofinite} and Taherkhani \cite{Taherkhani} 
proved that Conjecture \ref{conjecture:ivanov} fails for $g = 2$.  
\end{remark}

\paragraph{Stability theorem.}
We have two main theorems. The first 
will interest those who believe that Conjecture \ref{conjecture:ivanov} is true and the second will interest
those who do not.
Let $\Ivanov(g)$ stand for the assertion that Conjecture \ref{conjecture:ivanov} holds for $\Sigma_{g,n}^p$
for all $n,p \geq 0$.  Our first result is as follows.

\begin{maintheorem}[Stability]
\label{theorem:stability}
Assume that $\Ivanov(G)$ holds for some $G \geq 3$.  Then $\Ivanov(g)$ holds for all $g \geq G$.
\end{maintheorem}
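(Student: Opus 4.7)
The plan is to fix $g > G$, a finite-index subgroup $\Gamma \le \Mod_{g,n}^p$, and a homomorphism $\phi\colon \Gamma \to \Q$, and to conclude that $\phi = 0$. For any embedded subsurface $\Sigma' \subset \Sigma_{g,n}^p$ of genus exactly $G$ with a single boundary component that is nonseparating in the ambient surface, the inclusion induces an injection $\Mod(\Sigma') \cong \Mod_{G,1}^0 \hookrightarrow \Mod_{g,n}^p$; write $M_{\Sigma'}$ for its image. The subgroup $\Gamma \cap M_{\Sigma'}$ is finite-index in $M_{\Sigma'}$, so the hypothesis $\Ivanov(G)$ forces $\HH_1(\Gamma \cap M_{\Sigma'}; \Q) = 0$ and hence $\phi|_{\Gamma \cap M_{\Sigma'}} = 0$ for every such $\Sigma'$. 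The theorem therefore reduces to the purely group-theoretic assertion that $\Gamma$ is generated by the subgroups $\{\Gamma \cap M_{\Sigma'}\}$ as $\Sigma'$ varies.

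To establish this generation statement I would apply Putman's simplicial-complex trick to the complex $\mathcal{X}$ whose vertices are isotopy classes of the nonseparating genus-$G$ subsurfaces above and whose $k$-simplices are $(k+1)$-tuples of pairwise disjoint such subsurfaces. The argument splits into two halves. The combinatorial half shows that $\mathcal{X}$ is simply connected and that $\Gamma \backslash \mathcal{X}$ is connected, by a deformation argument reducing to the known connectivity of the complex of nonseparating simple closed curves (Harer, Hatcher--Thurston, Putman). The group-theoretic half then invokes Putman's trick to present $\Gamma$ in terms of its vertex stabilizers $\mathrm{Stab}_\Gamma(\Sigma')$ together with one ``edge swap'' element per $\Gamma$-orbit of edges, and then massages this presentation into the desired generating set.

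The principal obstacle is that $\mathrm{Stab}_{\Mod_{g,n}^p}(\Sigma')$ is strictly larger than $M_{\Sigma'}$: it also contains the mapping class group of the complement $\Sigma_{g,n}^p \setminus \Sigma'$, a surface of genus $g-G$ for which we have no $H_1$-vanishing hypothesis, and which can be as small as $1$ when $g = G+1$. A naive induction on $g$ therefore stalls at small complement genus. My proposed workaround is to exploit higher connectivity of $\mathcal{X}$ to arrange that every edge-swap element, and every complement-factor contribution to a vertex stabilizer, can be realized by a mapping class supported inside a disjoint third genus-$G$ subsurface $\Sigma''$; this absorbs the extraneous elements into the groups $M_{\Sigma''}$ rather than into complement mapping class groups. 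Verifying this ``swapping step'' --- equivalently, establishing the precise higher-connectivity properties of $\mathcal{X}$ needed to make it run --- is where I expect the bulk of the technical effort to lie, and it is also the place where the connection advertised in the abstract to the higher Prym representations is likely to enter, by controlling how cycles in $\mathcal{X}$ can be filled in $\Gamma \backslash \mathcal{X}$.
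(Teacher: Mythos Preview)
Your approach has a fatal gap in the range $G < g < 2G$. The complex $\mathcal{X}$ has an edge only when two one-holed genus-$G$ subsurfaces can be embedded disjointly in $\Sigma_{g,n}^p$, which forces $g \ge 2G$; for $G < g < 2G$ the complex is a discrete set of vertices, so it is not even connected and Putman's trick yields nothing. Your ``swapping step'' requires a \emph{third} disjoint genus-$G$ subsurface, hence $g \ge 3G$, so the absorption mechanism you sketch is unavailable exactly where the complement-factor problem is most acute. Replacing the argument by an induction on $g$ does not help: two disjoint one-holed genus-$(g{-}1)$ subsurfaces cannot sit inside $\Sigma_g$ either, once $g \ge 3$. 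There is also a second, independent gap even for large $g$: to conclude that vertex stabilizers surject onto $\HH_1(\Gamma;\Q)$ you need $\HH_1(\mathcal{X}/\Gamma;\Q)=0$, not merely that $\Gamma\backslash\mathcal{X}$ is connected. For the curve complex this vanishing is Boggi's theorem (Theorem~\ref{theorem:boggicurvecpx}), proved via weights in Hodge theory, and you have no analogue for $\mathcal{X}$.

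The paper's route is entirely different. It does not attempt to generate $\Gamma$ from genus-$G$ subsurface subgroups at all; instead Theorem~\ref{theorem:stability} is deduced formally from Theorem~\ref{theorem:main} by iterating
\[
\Ivanov(g,n+1,p+1)\ \Longrightarrow\ \Action(g,n+1,p)\ \Longrightarrow\ \Ivanov(g+1,n,p),
\]
passing through the higher Prym representations at every step. The hard implication (Theorem~\ref{theorem:actionimpliesivanov}) works on the curve complex, where Boggi's theorem is available, to reduce $\HH_1(\Gamma;\Q)$ to the stabilizer of a single nonseparating curve; the Prym hypothesis is then exactly what kills the Birman-kernel contribution inside that stabilizer (Lemma~\ref{lemma:fillpuncture}). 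That Birman kernel is precisely the ``complement-factor'' obstruction you correctly identified but could not control, and it is where the Prym representations genuinely enter --- not as a tool for filling cycles in a quotient complex, but as the hypothesis that annihilates point-pushing in $\HHH_1$.
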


\noindent
In particular, Theorem \ref{theorem:stability} says that to prove Conjecture \ref{conjecture:ivanov}, it
is enough to deal with the genus $3$ case.

\paragraph{Easy counterexamples.}
Our second main theorem says that if Conjecture \ref{conjecture:ivanov} fails, then there must
be a counterexample of a particularly simple form.  If $g \geq 2$ and $p \geq 1$, then
there is a well-known short exact sequence called the {\em Birman exact sequence} (see
\S \ref{section:birmanexactsequence}) that
takes the form
$$1 \longrightarrow \pi_1(\Sigma_{g,n}^{p-1}) \longrightarrow \Mod_{g,n}^p \longrightarrow \Mod_{g,n}^{p-1} \longrightarrow 1.$$
The terms in this exact sequence are as follows.
\begin{itemize}
\item The map  $\Mod_{g,n}^p \rightarrow \Mod_{g,n}^{p-1}$ comes from filling in a puncture $x$ on $\Sigma_{g,n}^p$.
\item $\pi_1(\Sigma_{g,n}^{p-1})$ is embedded in $\Mod_{g,n}^p$ as mapping classes the ``push'' 
the puncture $x$ around the surface $\Sigma_{g,n}^{p-1}$.  It is often known as the ``point-pushing'' subgroup of $\Mod_{g,n}^p$.
\end{itemize}
If $n \geq 1$, then this exact sequence splits, so we have a semidirect product decomposition
$$\Mod_{g,n}^p \cong \pi_1(\Sigma_{g,n}^{p-1}) \rtimes \Mod_{g,n}^{p-1}.$$
If $K < \pi_1(\Sigma_{g,n}^{p-1})$ and $G < \Mod_{g,n}^{p-1}$ are finite-index subgroups
such that $G$ normalizes $K$, then we obtain a finite-index subgroup
$$\Gamma = K \rtimes G < \Mod_{g,n}^p.$$
Observe now that
$$\HH_1(\Gamma;\Q) \cong \HH_1(G;\Q) \oplus (\HH_1(K;\Q))_G,$$
where $(\HH_1(K;\Q))_G$ are the {\em coinvariants} of the action of $G$ on $\HH_1(K;\Q)$, i.e.\
the quotient of $\HH_1(K;\Q)$ by the subspace generated by the set
$\Set{$x - g(x)$}{$x \in \HH_1(K;\Q)$, $g \in G$}$.  In particular, if 
$(\HH_1(K;\Q))_G \neq 0$, then $\HH_1(\Gamma;\Q) \neq 0$, in which case we will
say that $\Gamma$ surjects onto $\Z$ by the {\em finite-index point-pushing construction}.

\begin{maintheorem}[Easy counterexamples]
\label{theorem:counterexamples}
If $\Ivanov(g)$ fails, then there is a counterexample to $\Ivanov(g-1)$ that surjects onto $\Z$ by
the finite-index point-pushing construction.
\end{maintheorem}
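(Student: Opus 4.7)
Let $\Gamma < \Mod_{g,n}^p$ be a finite-index subgroup with $\HH_1(\Gamma;\Q) \neq 0$. My plan is to convert $\Gamma$ into a counterexample of point-pushing form via the Birman exact sequence, and then to use a covering-space construction to shift the genus down by one. After passing to a further finite-index subgroup I assume $\Gamma$ fixes every boundary component and puncture pointwise; and after possibly adding a puncture (pulling back via the surjection $\Mod_{g,n}^{p+1} \twoheadrightarrow \Mod_{g,n}^{p}$) I assume $p \geq 1$. Now apply the Birman exact sequence
$$1 \longrightarrow \pi_1(\Sigma_{g,n}^{p-1}) \longrightarrow \Mod_{g,n}^p \longrightarrow \Mod_{g,n}^{p-1} \longrightarrow 1$$
and examine the associated five-term exact sequence in rational group homology. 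Either the image $G_0$ of $\Gamma$ in $\Mod_{g,n}^{p-1}$ has nontrivial $\HH_1$ -- giving a smaller counterexample to $\Ivanov(g)$ and allowing induction on $p$ -- or the coinvariants of $K_0 := \Gamma \cap \pi_1(\Sigma_{g,n}^{p-1})$ under the $G_0$-action are nonzero.

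In the latter case, $K_0$ corresponds to a finite cover $\tilde{\Sigma} \to \Sigma_{g,n}^{p-1}$, and the nonvanishing of $(\HH_1(K_0;\Q))_{G_0}$ is precisely the nonvanishing of a higher Prym representation. The genus-reduction step is to realize $\Sigma_{g,n}^{p-1}$ itself as a finite (possibly ramified at punctures) cover of a surface $\Sigma' := \Sigma_{g-1,n'}^{p'-1}$, adjusting punctures on both sides as needed so Riemann-Hurwitz is satisfied. The composite $\tilde{\Sigma} \to \Sigma'$ then corresponds to a finite-index subgroup $K' < \pi_1(\Sigma')$ with $\HH_1(K';\Q) \cong \HH_1(\tilde{\Sigma};\Q)$ as vector spaces. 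A finite-index subgroup $G' < \Mod_{g-1,n'}^{p'-1}$ preserving the cover tower acts on $\HH_1(K';\Q)$ via the higher Prym representation for $\tilde{\Sigma} \to \Sigma'$, and this action agrees with the action of $G_0$ when both are restricted to a common finite-index subgroup. Since rational coinvariants are preserved by passage to finite-index subgroups, the nonvanishing of the $G_0$-coinvariants implies the same for $G'$, producing the desired point-pushing counterexample $K' \rtimes G' < \Mod_{g-1,n'}^{p'}$ to $\Ivanov(g-1)$.

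The principal obstacle is the construction of the auxiliary cover $\Sigma_{g,n}^{p-1} \to \Sigma'$ together with the identification of the two higher Prym representations on a common finite-index subgroup. This requires a careful analysis of how mapping classes lift across covers modulo the deck transformation ambiguity, and is presumably the reason the paper develops the higher Prym formalism in earnest. A secondary subtlety is showing that the reductions can always be arranged so that the cover $\Sigma_{g,n}^{p-1} \to \Sigma'$ exists with enough flexibility in the puncture data -- which constrains how few punctures one can afford at each stage.
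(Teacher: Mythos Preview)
Your approach diverges sharply from the paper's, and the divergence is where the gap lies. The paper does not attempt to realize $\Sigma_{g,n}^{p-1}$ as a cover of a lower-genus surface; instead it reduces genus by \emph{cutting along a nonseparating simple closed curve} $\gamma$ on $\Sigma_{g,n}^p$, which yields a subsurface of type $\Sigma_{g-1,n+2}^p$. The substantive content is Lemma~\ref{lemma:curvestabsurject}, proved using the curve complex together with Boggi's Theorem~\ref{theorem:boggicurvecpx}, which shows that $\HH_1(\Gamma_\gamma;\Q) \to \HH_1(\Gamma;\Q)$ is surjective. From this and a Birman-type analysis of the curve stabilizer one obtains the contrapositive implication $\lnot\Ivanov(g,n,p) \Rightarrow \lnot\Action(g-1,n+1,p)$; the point-pushing counterexample in genus $g-1$ is then built directly from the failure of $\Action(g-1,n+1,p)$ as in \S\ref{section:construction}.

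Your covering-space route to genus reduction cannot be made to work in general. Take the case where the original counterexample lives in $\Mod_{g,0}^0$, so that after adding one puncture and applying your Birman step the base surface is the closed surface $\Sigma_g$. For $g \geq 4$ there is no finite cover $\Sigma_g \to \Sigma'$ with $\Sigma'$ of genus $g-1$: if $\Sigma'$ is closed, Riemann--Hurwitz gives $2g-2 \geq d(2g-4)$ and hence $d < 2$; if $\Sigma'$ has boundary or punctures then $\pi_1(\Sigma')$ is free, and a finite-index subgroup of a free group is again free, hence not isomorphic to $\pi_1(\Sigma_g)$. Your induction on $p$ does not rescue this, because it has no valid base case: when $p=1$ and the image $G_0 < \Mod_{g,n}^0$ still has $\HH_1(G_0;\Q) \neq 0$, you are back at $p=0$, and re-adding a puncture followed by Birman simply returns $G_0$. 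The curve-complex machinery in the paper is therefore not an optional refinement of your outline but a replacement for the missing genus-reduction step.
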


\paragraph{Higher Prym representations.}
We prove Theorems \ref{theorem:stability} and \ref{theorem:counterexamples} by relating 
Conjecture \ref{conjecture:ivanov} to a family of linear representations
of $\Mod_{g,n}^p$ that we call the higher Prym representations.  As motivation, observe that
$\Mod_{g,n}^p$ acts on $\HH_1(\Sigma_{g,n}^p;\Z)$ and preserves the algebraic intersection
form.  If $n+p \leq 1$, then this is a nondegenerate alternating form, so in these cases
we get a representation $\Mod_{g,n}^p \rightarrow \Sp_{2g}(\Z)$.  This representation plays a fundamental
role in the study of $\Mod_{g,n}^p$.  

We generalize the symplectic representation of the mapping class group in the following way.  
Recall that a subgroup $G'$ of a group $G$ is {\em characteristic} if $f(G') \subset G'$ for all $f \in \Aut(G)$.  
Fix a basepoint $v_0 \in \Sigma_{g,n}^p$.  Regarding the basepoint $v_0$ as a puncture,
the group $\Mod_{g,n}^{p+1}$ acts on $\pi_1(\Sigma_{g,n}^p,v_0)$.  If $K < \pi_1(\Sigma_{g,n}^{p},v_0)$
is a finite-index characteristic subgroup, then $\Mod_{g,n}^{p+1}$ acts on the finite-dimensional
vector space $\HH_1(K;\Q)$.  

If $S$ is the finite cover of
$\Sigma_{g,n}^{p}$ corresponding to $K$, then $\HH_1(K;\Q) \cong \HH_1(S;\Q)$.
The action of $\Mod_{g,n}^{p+1}$ on $\HH_1(K;\Q)$ arises from lifting mapping classes on $\Sigma_{g,n}^{p+1}$
to the cover $S$ (observe that this uses the fact that $\Mod_{g,n}^{p+1}$ fixes
the basepoint $v_0 \in \Sigma_{g,n}^{p}$ -- if there were no basepoint, then one could only lift mapping classes modulo
the action of the deck group).  The {\em boundary subspace} $B$ of $\HH_1(K;\Q)$ is the subspace
spanned by the homology classes of the boundary components of $S$ and loops freely
homotopic into the punctures of $S$.  Define $V_K = \HH_1(K;\Q)/B$.  It is clear that $\Mod_{g,n}^{p+1}$ preserves $B$, so we obtain an
action of $\Mod_{g,n}^{p+1}$ on $V_K$.  We will call the resulting linear representation $\Mod_{g,n}^{p+1} \rightarrow \Aut(V_K)$
a {\em higher Prym representation}.

\begin{remark}
For $K < \pi_1(\Sigma_{g},v_0)$ such that $\pi_1(\Sigma_{g},v_0) / K$ is abelian,
the action of $\Mod_{g}^1$ on $\HH_1(K;\Q)$ was studied by Looijenga in \cite{LooijengaPrym} (though
he arranged the technical details a little differently).  He called
his representations Prym representations because they are related to Prym varieties.
\end{remark}

\begin{remark}
The vector space $V_K$ is the first homology group of the closed surface that results
from gluing discs to all the boundary components of $S$ and filling in all the punctures
of $S$.  This implies that $V_K$ has a nondegenerate intersection pairing that 
is preserved by $\Mod_{g,n}^{p+1}$, so the image of a higher Prym representation
lies in the symplectic group.
\end{remark}

\begin{remark}
Representations of $\Aut(F_n)$ similar to the higher Prym representations were studied
by Grunewald and Lubotzky \cite{GrunewaldLubotzky}.
\end{remark}

\paragraph{Nontriviality of the Prym representations.}
We make the following conjecture about the higher Prym representations.

\begin{conjecture}
\label{conjecture:actionconjecture}
Fix $g \geq 2$ and $n,p \geq 0$.  Let $K < \pi_1(\Sigma_{g,n}^{p})$ be a finite-index
characteristic subgroup.  Then for all
nonzero vectors $v \in V_K$, the $\Mod_{g,n}^{p+1}$-orbit of $v$ is infinite.
\end{conjecture}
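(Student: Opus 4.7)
The plan is to reformulate the conjecture in terms of fixed subspaces. Since $\HH_1(K;\Z)/B_\Z$ provides an integral structure on $V_K$ that is preserved by $\Mod_{g,n}^{p+1}$, the higher Prym representation takes values in the arithmetic group $\Sp(V_K;\Z)$, and a vector has finite orbit if and only if its stabilizer has finite index. So it suffices to prove: for every finite-index subgroup $\Gamma \leq \Mod_{g,n}^{p+1}$, the fixed subspace $V_K^{\Gamma}$ is zero.

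Next I would reduce to a single deck representation at a time. Set $G = \pi_1(\Sigma_{g,n}^p,v_0)/K$, a finite group; since $K$ is characteristic, $\Mod_{g,n}^{p+1}$ acts on $G$ through a homomorphism into the finite group $\Aut(G)$ whose kernel $\widetilde{\Mod}$ has finite index. Working over $\C$, write $V_K \otimes \C = \bigoplus_\chi (V_K)^\chi$ for the isotypic decomposition under $G$. Because $\widetilde{\Mod}$ acts trivially on $G$, it preserves each $(V_K)^\chi$, and after replacing $\Gamma$ by $\Gamma \cap \widetilde{\Mod}$ the problem reduces to showing that $(V_K)^\chi$ has no nonzero vector fixed by a finite-index subgroup of $\widetilde{\Mod}$, for each irreducible character $\chi$ of $G$.

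For the trivial character $\chi = 1$, a transfer argument identifies $(V_K)^1$ with the first homology of the closed surface obtained from $\Sigma_{g,n}^p$ by capping boundaries and filling punctures, and the induced action is precisely the classical symplectic representation $\Mod_{g,n}^{p+1} \to \Sp_{2g}(\Z)$. By Borel density applied to the noncompact semisimple Lie group $\Sp_{2g}(\R)$, every finite-index subgroup of $\Sp_{2g}(\Z)$ is Zariski dense in $\Sp_{2g}(\R)$ and therefore has no nonzero invariant vector.

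The remaining case of nontrivial $\chi$ is the main obstacle. My approach would be to exploit lifts of Dehn twists: for a simple closed curve $\gamma \subset \Sigma_{g,n}^p$, a bounded power of the Dehn twist $T_\gamma$ lies in $\widetilde{\Mod}$ and lifts to a product of Dehn twists along the components of the preimage of $\gamma$ in the cover $S$, acting on $(V_K)^\chi$ as a product of symplectic transvections whose directions are constrained by $\chi$. Letting $\gamma$ vary over an infinite $\Mod_{g,n}^{p+1}$-orbit of curves produces abundantly many transvections, and the goal would be to show that the subgroup they generate in $\Sp((V_K)^\chi)$ is Zariski dense, thereby ruling out nonzero fixed vectors. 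The hard part is precisely this density: for abelian $G$ the transvection directions can be pinned down and the argument goes through as in Looijenga's work, but for general nonabelian $G$ the directions form a subtle, $\chi$-dependent subset of $(V_K)^\chi$, and establishing Zariski density uniformly in $\chi$ appears to require a genuinely new geometric input — for example a decomposition of $S$ into subsurfaces adapted to $\chi$, or an arithmeticity-type rigidity statement that goes beyond Borel density.
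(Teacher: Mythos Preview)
The statement you are attempting to prove is Conjecture~\ref{conjecture:actionconjecture} --- an \emph{open conjecture}, not a theorem. The paper contains no proof of it. On the contrary, one of the paper's main points (Theorem~\ref{theorem:main}) is that this conjecture is essentially equivalent to Ivanov's Conjecture~\ref{conjecture:ivanov} on the vanishing of $\HH_1(\Gamma;\Q)$ for finite-index $\Gamma<\Mod_{g,n}^p$; a complete proof of the statement would therefore resolve a well-known open problem. The paper also records that the conjecture is \emph{false} for $g=0$ and $g=1$ (see the remarks after the conjecture and Appendix~\ref{appendix:counterexample}), so any argument must use $g\geq 2$ in an essential way.

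Your proposal is not a proof either, and to your credit you say so. The reduction from ``every orbit is infinite'' to ``$V_K^{\Gamma}=0$ for every finite-index $\Gamma$'' is correct, the passage to isotypic components under the deck group is standard, and the trivial-character piece does reduce to the classical symplectic representation, where Borel density finishes the job. But for nontrivial $\chi$ you end exactly where the problem begins: you need the subgroup of $\Sp((V_K)^{\chi})$ generated by lifted twists to have no nonzero fixed vector, and you acknowledge that proving Zariski density here ``appears to require a genuinely new geometric input.'' That is precisely the content of the conjecture. Looijenga's argument for abelian $G$, which the paper cites, succeeds because the transvection directions can be computed and arithmeticity verified by hand; for general $G$ no such computation is available, and the paper makes no claim to supply one. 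What you have written is a reasonable map of the terrain, but there is nothing here to compare against --- the paper leaves the statement as a conjecture and instead proves its equivalence with Conjecture~\ref{conjecture:ivanov}.
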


\noindent
Equivalently, none of the higher Prym representations have subrepresentations that factor through
a finite group.  

\begin{remark}
When $n=p=0$ and $g \geq 3$ and $K < \pi_1(\Sigma_{g,n}^p)$ is such that $\pi_1(\Sigma_{g,n}^{p}) / K$ is abelian,
then Looijenga essentially determined the image $G < \Aut(V_K)$ of the higher Prym representation of $\Mod_{g,n}^{p+1}$.  
Letting $A = \pi_1(\Sigma_{g,n}^{p}) / K$, the group $A$ acts on $V_K$ and $G$ commutes with this action.  What Looijenga proved
is that $G$ is an arithmetic subgroup of the centralizer of $A$ in $\Aut(V_K)$.  Here we are regarding $V_K$ as a symplectic vector
space and $\Aut(V_K)$ as a symplectic group over $\Q$.  From this, it is not hard to show that Conjecture \ref{conjecture:actionconjecture} holds
in these cases.
\end{remark}

\begin{remark}
Conjecture \ref{conjecture:actionconjecture} is false for $g=0$ and $g=1$.  The case
$g=0$ appears (in a different language) in work of McMullen \cite{McMullenBraid}.  Translated
into our language, \cite[Theorem 8.1]{McMullenBraid} gives a list of finite-index subgroups
$K < \pi_1(\Sigma_{0,0}^p)$ such that the subgroups $\Gamma_K < \Mod_{0,0}^{p+1}$ preserving $K$
have nontrivial finite orbits in $V_K$.  Taking the intersection of all the $\Mod_{0,0}^{p+1}$-translates
of one of these $K$ gives a characteristic subgroup of $\pi_1(\Sigma_{0,0}^p)$ which is a counterexample to
Conjecture \ref{conjecture:actionconjecture}.  A counterexample to the case $g=1$
is discussed in Appendix \ref{appendix:counterexample}.
\end{remark}

\paragraph{Relation between conjectures.}
Conjectures \ref{conjecture:ivanov} and \ref{conjecture:actionconjecture} appear quite different.  However,
it turns out that they are essentially equivalent.  Let $\Ivanov(g,n,p)$ stand for the
assertion that Conjecture \ref{conjecture:ivanov} is true for $\Sigma_{g,n}^p$ and let
$\Action(g,n,p)$
stand for the assertion that Conjecture \ref{conjecture:actionconjecture} is true
for $\Sigma_{g,n}^p$.

\begin{maintheorem}
\label{theorem:main}
Fix $g \geq 3$ and $p \geq 0$.
\begin{itemize}
\item $\Action(g-1,n+1,p)$ implies $\Ivanov(g,n,p)$ for $n \geq 0$.
\item $\Ivanov(g,n,p+1)$ implies $\Action(g,n,p)$ for $n \geq 1$.
\end{itemize}
\end{maintheorem}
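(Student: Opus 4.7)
I will argue both implications by contrapositive, using the Birman exact sequence to convert between invariant vectors in Prym representations and nonzero homomorphisms to $\Q$ on finite-index subgroups. The second implication is a direct semidirect-product calculation; the first requires a more delicate geometric reduction via a subsurface decomposition.

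\textbf{Second implication.} Assume $\Action(g,n,p)$ fails, with witnesses $K < \pi_1(\Sigma_{g,n}^p)$ (characteristic finite-index) and $v \in V_K$ nonzero with finite $\Mod_{g,n}^{p+1}$-orbit. Since $n \geq 1$, the Birman exact sequence splits, giving $\Mod_{g,n}^{p+1} \cong \pi_1(\Sigma_{g,n}^p) \rtimes \Mod_{g,n}^p$ via a section $s$. Because $K$ is characteristic and conjugation is trivial on a group's own abelianization, the image of $K$ in $\Mod_{g,n}^{p+1}$ is a normal subgroup acting trivially on $V_K$. Choose a finite-index $G_0 < \Mod_{g,n}^p$ such that $s(G_0)$ fixes $v$ (this exists because the orbit is finite), and set $\Gamma := K \rtimes s(G_0)$, a finite-index subgroup of $\Mod_{g,n}^{p+1}$ whose every element fixes $v$. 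The $\Mod_{g,n}^{p+1}$-invariant nondegenerate symplectic form $\omega$ on $V_K$ turns $v$ into a $\Gamma$-invariant nonzero linear functional $\omega(v,\cdot) : V_K \to \Q$, which pulls back through $H_1(K;\Q) \to V_K$ to a nonzero $s(G_0)$-invariant homomorphism $\tilde\phi : K \to \Q$. The formula $\Phi(k \cdot s(g)) := \tilde\phi(k)$ then defines a nonzero homomorphism $\Phi : \Gamma \to \Q$ (by $s(G_0)$-invariance, as in a semidirect product), contradicting $\Ivanov(g,n,p+1)$.

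\textbf{First implication.} Assume $\Ivanov(g,n,p)$ fails, with $\phi : \Gamma \to \Q$ nonzero on a finite-index $\Gamma < \Mod_{g,n}^p$. The geometric input is the decomposition $\Sigma_{g,n}^p = \Sigma_{g-1, n+1}^p \cup_\partial T$, with $T = \Sigma_{1,1}^0$ absorbing the extra boundary of $\Sigma_{g-1, n+1}^p$ into a new handle. Fixing a basepoint $w_0 \in \Sigma_{g-1, n+1}^p$, subsurface-extension by the identity on $T$ combined with the Birman projection yields a homomorphism $\tilde\iota : \Mod_{g-1, n+1}^{p+1} \to \Mod_{g,n}^p$. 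Pulling back $\phi$ gives a homomorphism $\phi \circ \tilde\iota$ on a subgroup of $\Mod_{g-1, n+1}^{p+1}$; running the second implication's construction in reverse (using that the Birman sequence for $\Sigma_{g-1, n+1}^p$ splits since $n+1 \geq 1$), a nonzero contribution to $\phi \circ \tilde\iota$ from the point-pushing subgroup of $\Mod_{g-1, n+1}^{p+1}$ yields, via symplectic duality, a nonzero $\Mod_{g-1, n+1}^{p+1}$-invariant vector in $V_K$ for a suitable characteristic cover $K$ of $\pi_1(\Sigma_{g-1, n+1}^p)$, contradicting $\Action(g-1, n+1, p)$.

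\textbf{Main obstacle.} The hard part is ensuring that a nonzero contribution survives in the point-pushing part of $\phi \circ \tilde\iota$. Two issues arise: $\phi \circ \tilde\iota$ could vanish entirely (which I plan to address by varying the decomposition -- the orbit of $T$ under $\Mod_{g,n}^p$ should generate enough of $\Gamma$ that some decomposition yields a nonzero pullback, since otherwise $\phi$ would vanish on all Dehn twists supported away from a handle); and the nonzero class could land in the boundary subspace $B \subset H_1(K;\Q)$ (to be addressed by refining $K$ so that the deck-group action separates the class from $B$). Controlling both issues simultaneously, while keeping the characteristic condition on $K$, is the primary technical hurdle.
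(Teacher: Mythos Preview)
Your treatment of the second implication is correct and essentially identical to the paper's argument in \S\ref{section:construction}: use the splitting of the Birman sequence to form $\Gamma = K \rtimes G$, and use the symplectic pairing to convert a fixed nonzero vector in $V_K$ into a nonzero $G$-invariant functional on $\HH_1(K;\Q)$.

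The first implication, however, has a genuine gap at the very first step. The map $\tilde\iota : \Mod_{g-1,n+1}^{p+1} \to \Mod_{g,n}^p$ you describe---extend by the identity over $T$ and forget the basepoint $w_0$---annihilates the point-pushing subgroup. Indeed, a point-push in $\Mod_{g-1,n+1}^{p+1}$ is a product $T_x T_y^{-1}$ where $x$ and $y$ are parallel curves separated only by $w_0$; once you forget $w_0$ and sit inside $\Sigma_{g,n}^p$, the curves $x$ and $y$ are isotopic and $T_x T_y^{-1}$ is trivial. So $\phi \circ \tilde\iota$ is automatically zero on the point-pushing part, and your plan of ``running the second implication's construction in reverse'' cannot get started.

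The paper does not argue this direction by contrapositive at all. It proves directly that $\Action(g-1,n+1,p)$ forces $\HH_1(\Gamma;\Q)=0$, and the argument has substantial content you are missing. The correct way to see point-pushing inside $\Mod_{g,n}^p$ is via the stabilizer $\Gamma_\gamma$ of a nonseparating curve: cutting along $\gamma$ and capping one of the new boundaries with a punctured disc gives a quotient of $\Gamma_\gamma$ lying in $\Mod_{g-1,n+1}^{p+1}$, and \emph{that} Birman kernel maps nontrivially to $\HH_1(\Gamma_\gamma;\Q)$ in general. Lemma~\ref{lemma:fillpuncture} shows that under $\Action(g-1,n+1,p)$ this contribution vanishes (Step~1 there handles your worry about the boundary subspace $B$ by recognizing boundary classes as multitwists, which die by Corollary~\ref{corollary:multitwistsvanish}). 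What your ``vary the decomposition'' idea is really asking for is that the maps $\HH_1(\Gamma_\gamma;\Q) \to \HH_1(\Gamma;\Q)$ are jointly surjective; this is Lemma~\ref{lemma:curvestabsurject}, and its proof is not soft---it requires Harer's connectivity theorem together with Boggi's theorem (Theorem~\ref{theorem:boggicurvecpx}) that $\HH_1(\Curves_{g,n}^p/\Gamma;\Q)=0$, which in turn uses weights in the cohomology of algebraic varieties. There is no elementary way known to get this surjectivity, and your proposal does not supply one.
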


\begin{remark}
It is easy to see that $\Action(g,n,p)$ implies $\Action(g,n',p')$ for $n' \leq n$ and $p' \leq p$,
so Theorem \ref{theorem:main} also shows that $\Ivanov(g,1,p)$ 
implies $\Action(g,0,p)$ for $g \geq 3$ and $p \geq 0$.
\end{remark}

\begin{remark}
The proof of Theorem \ref{theorem:main} shows that to prove $\Ivanov(g,n,p)$, it is enough to
prove Conjecture \ref{conjecture:actionconjecture} for a cofinal set
of finite-index characteristic subgroups of $\pi_1(\Sigma_{g-1,n+1}^p)$.
\end{remark}

\paragraph{Derivation of Theorems \ref{theorem:stability} and \ref{theorem:counterexamples} from
Theorem \ref{theorem:main}.}
Theorem \ref{theorem:main} immediately implies Theorems \ref{theorem:stability} and 
\ref{theorem:counterexamples}.  First, we can apply Theorem \ref{theorem:main} twice
and see that if $g \geq 3$ and $n,p \geq 0$, then
$$\Ivanov(g,n+1,p+1) \Longrightarrow \Action(g,n+1,p) \Longrightarrow \Ivanov(g+1,n,p).$$
Iterating this, we see that if $\Ivanov(G,n+(g-G),p+(g-G))$ is true for some $G \geq 3$ and $g \geq G$ and
$n,p \geq 0$, then $\Ivanov(g,n,p)$ is true.  In other words, $\Ivanov(G)$ implies $\Ivanov(g)$ for
$g \geq G$.

As far as Theorem \ref{theorem:counterexamples} goes, we can again apply Theorem \ref{theorem:main}
twice and see that if $g \geq 3$ and $n,p \geq 0$, then
$$\lnot \Ivanov(g,n,p) \Longrightarrow \lnot \Action(g-1,n+1,p) \Longrightarrow \lnot \Ivanov(g-1,n+1,p+1).$$
The proof of the second implication here actually produces a counterexample to $\Ivanov(g-1,n+1,p+1)$ 
using the finite-index point-pushing construction.  The input for this construction is obtained
from a counterexample to the assertion $\Action(g-1,n+1,p)$.  See \S \ref{section:construction} for
more details.

\begin{remark}
An amusing property of the above derivation is that it is works whether Conjecture
\ref{conjecture:actionconjecture} holds or not.
\end{remark}

\paragraph{History.}
Fix some $g \geq 3$.  There are only a few known examples of finite-index subgroups $\Gamma < \Mod_{g,n}^p$ for
which $\HH_1(\Gamma;\Q)$ is known to vanish.  In \cite{HainTorelli}, Hain verifies this
for $\Gamma$ that contain the {\em Torelli group}, which is the kernel of the action
of $\Mod_{g,n}^p$ on $\HH_1(\Sigma_{g,0}^0;\Z)$.  This was later generalized to some deeper
subgroups by Boggi \cite{BoggiCompact} and the first author \cite{PutmanH1FiniteIndex}.

A related result, which was proven independently by the first author \cite{PutmanH1FiniteIndex}
and Bridson \cite{Bridson}, says that if $\Gamma < \Mod_{g,n}^p$ is a finite index subgroup and
$T_{\gamma} \in \Mod_{g,n}^p$ is a Dehn twist about a simple closed curve $\gamma$,
then the image of $T_{\gamma}^k$ in $\HH_1(\Gamma;\Q)$ vanishes for any $k$ such that $T_{\gamma}^k \in \Gamma$.  
This theorem will play an important role in our proof of Theorem \ref{theorem:main}.

We finally should mention Boggi's recent work on the congruence subgroup problem for $\Mod_{g,n}^p$ (see
\cite{BoggiCongruence}).  The congruence subgroup problem gives a conjectural classification of
all finite-index subgroups of $\Mod_{g,n}^p$.  Though Boggi's proof of the congruence
subgroup problem itself appears to be fatally flawed,
he does give a correct proof of the following beautiful result.  Let $\Curves_{g,n}^p$ be the
curve complex for $\Sigma_{g,n}^p$ (see \S \ref{section:curvecomplex} for details).  A theorem
of Harer says that $\HH_k(\Curves_{g,n}^p;\Z)=0$ for $k$ in some range.
Boggi proved that we also have $\HH_k(\Curves_{g,n}^p/\Gamma;\Q)=0$ for $k$ in this same range 
for $\Gamma$ a finite-index subgroup of $\Mod_{g,n}^p$.  This
result, which was proven using
the theory of weights on the cohomology of algebraic varieties
\cite{DeligneWeight},
%Hodge theory,
plays a fundamental role in our proof of Theorem
\ref{theorem:main}.

\paragraph{Outline.}
We begin in \S \ref{section:preliminaries} with a discussion of some preliminary material about
mapping class groups and group homology.  Next, in \S \ref{section:bigdirection} we prove
the portion of Theorem \ref{theorem:main} asserting that Conjecture \ref{conjecture:actionconjecture}
implies Conjecture \ref{conjecture:ivanov} (see Theorem \ref{theorem:actionimpliesivanov}).  Finally,
in \S \ref{section:construction} we prove the portion of Theorem \ref{theorem:main} asserting that
Conjecture \ref{conjecture:ivanov} implies Conjecture \ref{conjecture:actionconjecture}.

\section{Preliminaries}
\label{section:preliminaries}

This section has two parts.  Some facts about group homology are reviewed in
\S \ref{section:grouphomology} and some background about the mapping class group
is discussed in \S \ref{section:surfaces}.

\subsection{Group homology}
\label{section:grouphomology}

A good reference for this material is \cite{BrownCohomology}.  We begin with some notation.

\begin{definition}
Let $G$ be a group and let $M$ be a vector space upon which $G$ acts.
\begin{itemize}
\item The {\em invariants} of $G$ acting on $M$ are $M^G = \Set{$x \in m$}{$g(x)=x$ for all $g \in G$}$.
\item The {\em coinvariants} of $G$ acting on $M$ are $M_G = M/K$, where $K < M$ is the subspace spanned
by the set $\Set{$x-g(x)$}{$x \in M$, $g \in G$}$.
\end{itemize}
\end{definition}

\noindent
The invariants and coinvariants are related by the following lemma, whose proof is an easy exercise.

\begin{lemma}
\label{lemma:selfdualcoinv}
Let $G$ be a group and $M$ be a $G$-vector space.  Let $M^{\ast}$ denote the dual of $M$.
Then $(M_G)^{\ast} \cong (M^{\ast})^G$.
\end{lemma}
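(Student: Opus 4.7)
The plan is to construct an explicit natural isomorphism via the universal property of the coinvariants. First I would fix notation: let $k$ denote the ground field (with trivial $G$-action), and recall that the $G$-action on $M^{\ast}$ is defined by $(g \cdot \phi)(x) = \phi(g^{-1} x)$ for $\phi \in M^{\ast}$, $x \in M$, and $g \in G$. Let $K \subset M$ be the subspace spanned by elements of the form $x - g(x)$, so that $M_G = M/K$ and let $\pi \colon M \to M_G$ denote the quotient map.

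Next I would identify the invariants of $M^{\ast}$. A linear functional $\phi \in M^{\ast}$ lies in $(M^{\ast})^G$ iff $\phi(g^{-1} x) = \phi(x)$ for all $g \in G$ and $x \in M$, which is equivalent to the condition that $\phi$ vanishes on every element of the form $x - g(x)$, i.e.\ that $\phi|_K = 0$. By the universal property of the quotient vector space $M_G = M/K$, the functionals $\phi \in M^{\ast}$ satisfying $\phi|_K = 0$ are precisely those of the form $\phi = \bar{\phi} \circ \pi$ for a unique $\bar{\phi} \in (M_G)^{\ast}$.

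This gives a map $\Phi \colon (M_G)^{\ast} \to (M^{\ast})^G$ sending $\bar{\phi} \mapsto \bar{\phi} \circ \pi$. It is clearly linear, and the discussion above shows it is a bijection onto $(M^{\ast})^G$. I would then note (if desired) that $\Phi$ is natural in $M$ with respect to $G$-equivariant linear maps, so this is a natural isomorphism of functors.

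There is no real obstacle here: the statement is a routine consequence of the universal property of the quotient combined with the definition of the dual action. The only care needed is to keep track of the inverse in the definition of the $G$-action on $M^{\ast}$, which ensures that invariance of $\phi$ translates correctly into vanishing on the defining subspace of $M_G$.
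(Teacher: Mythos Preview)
Your argument is correct and is exactly the standard verification one would give; the paper does not actually supply a proof but simply remarks that the lemma is ``an easy exercise,'' so your write-up fills in precisely what the authors omitted.
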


The next lemma we need is the following, which is a standard consequence
of the existence of the {\em transfer map} (see, e.g., \cite[Chapter III.9]{BrownCohomology}).

\begin{lemma}
\label{lemma:transfer}
If $G_1$ is a finite index subgroup of $G_2$,
then the map $\HH_1(G_1;\Q) \rightarrow \HH_1(G_2;\Q)$
is a surjection.
\end{lemma}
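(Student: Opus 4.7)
The plan is to deduce surjectivity directly from the basic formal property of the transfer map in group homology. Concretely, given the inclusion $\iota : G_1 \hookrightarrow G_2$ of finite index, I would invoke the construction from \cite[Chapter III.9]{BrownCohomology} of a transfer homomorphism
\[
\tau : \HH_1(G_2;\Q) \longrightarrow \HH_1(G_1;\Q)
\]
satisfying the key identity $\iota_\ast \circ \tau = [G_2 : G_1] \cdot \mathrm{id}_{\HH_1(G_2;\Q)}$, where $\iota_\ast : \HH_1(G_1;\Q) \to \HH_1(G_2;\Q)$ is the map induced by the inclusion.

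From here the argument is one line. Since $[G_2:G_1]$ is a nonzero integer, multiplication by $[G_2:G_1]$ is an isomorphism of the $\Q$-vector space $\HH_1(G_2;\Q)$; in particular it is surjective. But this surjective endomorphism factors through $\iota_\ast$, so $\iota_\ast$ is itself surjective. This is exactly the desired conclusion.

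There is really no obstacle: the transfer map and the identity $\iota_\ast \circ \tau = [G_2:G_1]$ are standard, and rationalization converts the index into a unit. The only authorial decision is how much to unpack. Given that the excerpt is already appealing to \cite[Chapter III.9]{BrownCohomology} in the statement, I would keep the write-up to the two sentences above rather than recalling the chain-level definition of $\tau$ (averaging over coset representatives in a free $\Z G_2$-resolution and restricting to $G_1$).
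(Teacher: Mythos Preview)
Your argument is correct and is exactly the standard transfer argument the paper is invoking; the paper does not spell out a proof at all but simply cites \cite[Chapter III.9]{BrownCohomology} and calls the lemma a ``standard consequence of the existence of the transfer map.'' Your write-up is precisely the intended unpacking of that remark.
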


\noindent
If $G_1$ is a normal subgroup of $G_2$, then $G_2$ acts on $G_1$ and we have the following
more precise lemma, which is an immediate consequence of the Hochschild-Serre spectral sequence.

\begin{lemma}
\label{lemma:supertransfer}
If $G_1$ is a finite index normal subgroup of $G_2$,
then we have $\HH_1(G_2;\Q) \cong (\HH_1(G_1;\Q))_{G_2}$.
\end{lemma}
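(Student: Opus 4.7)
The plan is to exploit the Hochschild--Serre spectral sequence together with the fact that finite groups have vanishing rational homology in positive degrees.

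First, I would form the short exact sequence
\[
1 \longrightarrow G_1 \longrightarrow G_2 \longrightarrow Q \longrightarrow 1,
\]
where $Q = G_2/G_1$ is finite since $G_1$ has finite index. The associated Hochschild--Serre spectral sequence has
\[
E_2^{p,q} = \HH_p(Q;\HH_q(G_1;\Q)) \Longrightarrow \HH_{p+q}(G_2;\Q).
\]

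Next, I would observe that for any $\Q[Q]$-module $M$, the standard averaging/transfer argument gives $\HH_p(Q;M)=0$ for $p\geq 1$ (this is essentially Lemma~\ref{lemma:transfer} applied to the trivial subgroup of $Q$, combined with the fact that $\HH_p$ of a finite group with rational coefficients vanishes for $p>0$). Thus the entire $E_2$-page is concentrated in the column $p=0$, where
\[
E_2^{0,q} = \HH_0(Q;\HH_q(G_1;\Q)) = (\HH_q(G_1;\Q))_Q.
\]
The spectral sequence therefore collapses and yields the edge isomorphism $\HH_n(G_2;\Q) \cong (\HH_n(G_1;\Q))_Q$ for every $n$; specializing to $n=1$ gives $\HH_1(G_2;\Q)\cong (\HH_1(G_1;\Q))_Q$.

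Finally, I would identify the $Q$-coinvariants with the $G_2$-coinvariants. The action of $G_2$ on $\HH_1(G_1;\Q)$ is by conjugation, and inner automorphisms of $G_1$ induce the identity on $\HH_1(G_1;\Q)$; hence the action factors through $Q=G_2/G_1$, and $(\HH_1(G_1;\Q))_Q = (\HH_1(G_1;\Q))_{G_2}$. Putting everything together gives the desired isomorphism. There is no real obstacle here; the only thing to check carefully is the vanishing of $\HH_p(Q;M)$ for $p\geq 1$ and rational $M$, which is the one input that forces the spectral sequence to degenerate.
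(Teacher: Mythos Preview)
Your argument is correct and is precisely the intended one: the paper states this lemma as an immediate consequence of the Hochschild--Serre spectral sequence, and what you have written is exactly the standard way to unpack that remark (degeneration from the vanishing of $\HH_p(Q;M)$ for $p\geq 1$ with rational coefficients, plus the observation that the $G_2$-action factors through $Q$).
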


The final result we need gives a decomposition of $\HH_1(G)$ for $G$ acting on a space $X$.  We need
the following definition.

\begin{definition}
A group $G$ acts on a simplicial complex $X$ {\em without rotations} if for all simplices $s$ of $X$,
the stabilizer subgroup $G_s$ stabilizes $s$ pointwise.
\end{definition}

\noindent
The theorem we need is as follows.  It follows easily from the two spectral sequences given in
\cite[Chapter VII.7]{BrownCohomology}.

\begin{theorem}
\label{theorem:equivariant}
Let a group $G$ act on a connected simplicial complex $X$ without rotations.  Fix a ring $R$.
Assume that $\HH_1(X;R) = \HH_1(X/G;R) = 0$.  Then the natural map
$$\bigoplus_{v \in X^{(0)}} \HH_1(G_v;R) \longrightarrow \HH_1(G;R)$$
is a surjection.
\end{theorem}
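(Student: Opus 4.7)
The plan is to extract the surjection from the two first-quadrant spectral sequences converging to the Borel equivariant homology $\HH^G_*(X;R)$ described in \cite[Chapter VII.7]{BrownCohomology}, by reading off the total-degree-$1$ filtration from each.

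The first spectral sequence has $E^2_{pq} = \HH_p(G; \HH_q(X; R))$. Since $X$ is connected, $\HH_0(X;R) = R$ with trivial $G$-action, so $E^2_{1,0} = \HH_1(G;R)$. The hypothesis $\HH_1(X;R)=0$ immediately kills $E^2_{0,1}$. The position $(1,0)$ supports no nonzero differentials in either direction (both the sources $E^r_{1+r,2-r}$ and targets $E^r_{1-r,r-1}$ lie outside the first quadrant for $r\ge 2$), so $E^\infty_{1,0} = \HH_1(G;R)$. The filtration of $\HH_1^G(X;R)$ therefore reduces to an isomorphism $\HH_1^G(X;R)\cong \HH_1(G;R)$.

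The second (isotropy) spectral sequence has $E^1_{pq} = \bigoplus_{\sigma\in\Sigma_p} \HH_q(G_\sigma; R_\sigma)$, where $\Sigma_p$ is a set of orbit representatives of $p$-simplices and $R_\sigma$ is an orientation module. The no-rotations hypothesis says $G_\sigma$ fixes $\sigma$ pointwise, so $R_\sigma = R$ canonically and $X/G$ is an honest simplicial complex. In particular the bottom row is the simplicial chain complex of $X/G$, giving $E^2_{p,0} = \HH_p(X/G;R)$; the assumption $\HH_1(X/G;R)=0$ forces $E^2_{1,0}=0$, hence $E^\infty_{1,0}=0$. The filtration of $\HH_1^G(X;R)$ thus collapses to $\HH_1^G(X;R)\cong E^\infty_{0,1}$, and this is a quotient of $E^1_{0,1} = \bigoplus_{v\in\Sigma_0}\HH_1(G_v;R)$ because no $d^r$ can leave $(0,1)$ in a first-quadrant spectral sequence.

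Splicing the two identifications gives a surjection $\bigoplus_{v\in\Sigma_0}\HH_1(G_v;R)\twoheadrightarrow \HH_1(G;R)$. Summing over all vertices of $X$ rather than orbit representatives only enlarges the domain and is compatible with the map, since stabilizers of vertices in the same orbit are conjugate in $G$ and conjugation acts trivially on $\HH_1(G;R)$; this yields the stated surjection. The only real content is keeping the two low-degree filtrations straight and observing that ``without rotations'' is precisely the condition needed to trivialize the orientation coefficients in the isotropy spectral sequence, so no serious obstacle arises — the rest is bookkeeping.
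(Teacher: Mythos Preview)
Your argument is correct and is precisely the approach the paper indicates: it invokes the two spectral sequences of \cite[Chapter VII.7]{BrownCohomology} and reads off the degree-$1$ abutment from each, with the ``without rotations'' hypothesis used exactly to trivialize the orientation coefficients in the isotropy spectral sequence. One small index slip: the source of a $d^r$ landing at $(1,0)$ sits at $(1+r,1-r)$, not $(1+r,2-r)$, but since $1-r<0$ for $r\ge 2$ your conclusion is unaffected.
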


\subsection{Surfaces and mapping class groups}
\label{section:surfaces}

A good reference for this material is \cite{FarbMargalitBook}, which contains
proofs of all statements for which we do not give proofs.

\subsubsection{Embedding one surface into another}
\label{section:surfacefunctor}

Assume that $g \geq 2$ and $n,p \geq 0$.  Let $S$ be a subsurface of 
$\Sigma_{g,n}^p$, and denote by $\Mod(S)$ the mapping class group
of $S$.  There is then an induced map $\phi : \Mod(S) \rightarrow \Mod_{g,n}^p$ obtained by extending
mapping classes by the identity over $\Sigma_{g,n}^p \setminus S$.  This map is usually injective.  However,
in the following situations it is not injective.
\begin{itemize}
\item If $\Sigma_{g,n}^p \setminus \Interior(S)$ is a punctured disc with boundary component $\beta$, 
then we have a short exact sequence
\begin{equation}
\label{eqn:boundarytopuncture}
1 \longrightarrow \Z \longrightarrow \Mod(S) \longrightarrow \Mod_{g,n}^p \longrightarrow 1.
\end{equation}
Here the kernel $\Z$ is generated by the Dehn twist $T_{\beta}$.
\item If $\Sigma_{g,n}^p \setminus \Interior(S)$ is an annulus with boundary components $x$ and $y$ and
both $x$ and $y$ lie in $S$, then we have a short exact sequence
\begin{equation}
\label{eqn:complementscc}
1 \longrightarrow \Z \longrightarrow \Mod(S) \longrightarrow (\Mod_{g,n}^p)_{\gamma} \longrightarrow 1.
\end{equation}
Here $\gamma$ is a simple closed curve forming the core of the annulus $\Sigma_{g,n}^p \setminus \Interior(S)$
and $(\Mod_{g,n}^p)_{\gamma}$ is the subgroup of $\Mod_{g,n}^p$ consisting of mapping classes
that fix the isotopy class of $\gamma$ (as an oriented curve -- elements of $(\Mod_{g,n}^p)_{\gamma}$
cannot reverse the orientation of $\gamma$).  Also, $\Z$ is generated by $T_x T_{y}^{-1}$.
\end{itemize}

\subsubsection{Surfaces with boundary and the Birman exact sequence}
\label{section:birmanexactsequence}

Assume that $g \geq 2$ and $n,p \geq 0$.  As was discussed in \S \ref{section:introduction},
we have the Birman exact sequence
$$1 \longrightarrow \pi_1(\Sigma_{g,n}^{p}) \longrightarrow \Mod_{g,n}^{p+1} \longrightarrow \Mod_{g,n}^p \longrightarrow 1,$$
where the map $\Mod_{g,n}^{p+1} \longrightarrow \Mod_{g,n}^p$ comes from filling in a puncture $x$ of $\Sigma_{g,n}^{p+1}$
and $\pi_1(\Sigma_{g,n}^p)$ is embedded in $\Mod_{g,n}^{p+1}$ as mapping classes
that ``push'' $x$ around loops in $\Sigma_{g,n}^p$.  If $n \geq 1$, then this exact sequence splits
via a map $\Mod_{g,n}^p \hookrightarrow \Mod_{g,n}^{p+1}$ induced by an embedding
$\Sigma_{g,n}^p \hookrightarrow \Sigma_{g,n}^{p+1}$ such that $\Sigma_{g,n}^{p+1} \setminus \Interior \Sigma_{g,n}^p$ is
homeomorphic to $\Sigma_{0,2}^1$.  This originally appeared in \cite{BirmanExactSequence}, and a suitable
textbook reference is \cite{FarbMargalitBook}.

\Figure{figure:birmandiagram}{BirmanDiagram}{a. We can split the Birman exact sequence
by a map $\Mod_{g,n}^p \hookrightarrow \Mod_{g,n}^{p+1}$ as shown.
\CaptionSpace b. A simple closed curve $\gamma \in \pi_1(\Sigma_{g,b}^p)$.
\CaptionSpace c. The result of pushing the basepoint around $\gamma$.
\CaptionSpace d. The associated element of $\Mod_{g,b}^{p+1}$ is $T_x T_y^{-1}$, where $x$
and $y$ are as shown.}

We will need the following standard lemma about the mapping classes associated to certain elements of $\pi_1(\Sigma_{g,n}^p)$.
A {\em multitwist} is a product $T_{\gamma_1}^{k_1} \cdots T_{\gamma_m}^{k_m}$, where
the $\gamma_i$ are disjoint simple closed curves and $k_i \in \Z$.

\begin{lemma}
\label{lemma:birmansimple}
Consider $\gamma \in \pi_1(\Sigma_{g,n}^p)$ that can be realized by a simple closed curve.
Then the element of $\Mod_{g,n}^{p+1}$ associated to $\gamma$ is a multitwist.
\end{lemma}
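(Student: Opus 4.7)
The plan is to work locally near the curve $\gamma$ and exhibit the point-pushing diffeomorphism explicitly as a product of two Dehn twists on disjoint curves. The key observation is that although the Birman exact sequence is defined by pushing the puncture $x$ around an arbitrary loop in $\Sigma_{g,n}^p$, when the loop happens to be a simple closed curve we may choose an isotopy representative whose support is confined to a small annular neighborhood of $\gamma$.

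More precisely, first I would realize $\gamma$ as a genuine embedded simple closed curve passing through the filled-in puncture point and take a closed annular regular neighborhood $A$ of this curve in the filled surface $\Sigma_{g,n}^p$. Let $x$ and $y$ denote the two boundary components of $A$. These are disjoint simple closed curves in $\Sigma_{g,n}^{p+1}$ that lie on either side of $\gamma$. Next I would choose an isotopy of $\Sigma_{g,n}^p$ that pushes the marked point once around the core curve of $A$ and that is supported in the interior of $A$; since everything happens inside $A$, the associated element of $\Mod_{g,n}^{p+1}$ is supported in $A$ minus the puncture.

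The remaining task is the computation inside a once-punctured annulus, which is essentially the only real content of the lemma. Parametrizing $A$ as $S^1 \times [-1,1]$ with the puncture at $(1,0)$, the point-push can be described as sweeping the puncture once around $S^1 \times \{0\}$; dragging the rest of the annulus along continuously produces a left twist on one side of $\gamma$ and a right twist on the other side, so the resulting mapping class on $A$ minus the puncture is exactly $T_x T_y^{-1}$ (up to orientation conventions). This is the content illustrated in Figure \ref{figure:birmandiagram}(d). Since $x$ and $y$ are disjoint simple closed curves, $T_x T_y^{-1}$ is a multitwist by definition, completing the proof.

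The main obstacle is purely bookkeeping: one must verify carefully that the local computation in the once-punctured annulus really does give $T_x T_y^{-1}$ rather than, say, a single twist, and that signs and orientation conventions match the chosen splitting of the Birman exact sequence. Once the local model is set up, no further input about the ambient surface is needed, because the point-pushing diffeomorphism and both Dehn twists $T_x, T_y$ are supported in $A$.
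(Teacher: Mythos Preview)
Your proposal is correct and is precisely the standard argument the paper has in mind: the lemma is stated without proof as a ``standard lemma'' and is merely illustrated by Figure~\ref{figure:birmandiagram}(b)--(d), which depicts exactly your annular neighborhood picture and the identification of the point-push with $T_x T_y^{-1}$. Your write-up is a faithful expansion of that figure into a proof.
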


\noindent
For example, if $\gamma \in \pi_1(\Sigma_{g,n}^p)$ is as indicated in Figure \ref{figure:birmandiagram}.b, then as
indicated in Figure \ref{figure:birmandiagram}.c the mapping class associated to $\gamma$ equals $T_x T_{y}^{-1}$, where $x$
and $y$ are the curves indicated in Figure \ref{figure:birmandiagram}.d.

To help us recognize elements of $\pi_1(\Sigma_{g,n}^p)$ that can be realized by simple closed curves,
the following well-known lemma will be useful.

\begin{lemma}
\label{lemma:conjugatesimple}
Fix $g,n,p \geq 0$, and let $v_0 \in \Interior(\Sigma_{g,n}^p)$.  Consider $\gamma \in \pi_1(\Sigma_{g,n}^p,v_0)$
such that $\gamma$ is freely homotopic to a simple closed curve.  Then $\gamma$ can be realized by a
based simple closed curve.
\end{lemma}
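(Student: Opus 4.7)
The plan is to realize $\gamma$ by sliding a simple closed curve through the basepoint $v_0$ within an annular region, after first setting up an embedded arc from $v_0$ to a free simple representative of $\gamma$.

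To start, I would choose a simple closed curve $c$ freely homotopic to $\gamma$ and isotope it so that $v_0 \notin c$. The free homotopy between $\gamma$ and $c$ traces a path from $v_0$ to a point on $c$ realizing the conjugation; I would use this, combined with a surgery to remove self-intersections by cutting out sub-loops and adjustments by pre-composing with suitable based loops at $v_0$, to produce an embedded arc $\alpha$ from $v_0$ to a point $p \in c$ with $\alpha \cap c = \{p\}$ such that the based-loop class $\alpha \cdot c_p \cdot \alpha^{-1}$ in $\pi_1(\Sigma_{g,n}^p, v_0)$ is precisely $\gamma$, where $c_p$ denotes $c$ based at $p$ with appropriate orientation.

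Given such an $\alpha$, a regular neighborhood $N$ of $\alpha \cup c$ in $\Sigma_{g,n}^p$ deformation retracts onto $\alpha \cup c$, which is homotopy equivalent to $S^1$; since $\Sigma_{g,n}^p$ is orientable, $N$ is an annulus containing $v_0$ in its interior, with core freely homotopic to $c$. Inside $N$, I would slide the core across the width of the annulus to obtain an embedded circle through $v_0$ isotopic in $N$ to the core. As a based loop at $v_0$, this circle represents the generator of $\pi_1(N, v_0)$, which maps to $\alpha \cdot c_p \cdot \alpha^{-1} = \gamma$ in $\pi_1(\Sigma_{g,n}^p, v_0)$, completing the construction.

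The main obstacle is producing the embedded arc $\alpha$ in the correct homotopy class, since an arbitrary based path on a surface need not be homotopic rel endpoints to an embedded arc. One clean way to handle this is to put a complete finite-area hyperbolic metric on $\Sigma_{g,n}^p$; the sporadic cases of non-negative Euler characteristic have abelian or trivial fundamental group and can be dealt with by hand. The free class of $\gamma$ then contains a unique closed geodesic $c$, which is simple because the free class already contains a simple closed curve. Lifting to $\HBolic^2$, the axis of the deck transformation $\gamma$ is a lift $\tilde c$ of $c$, and the perpendicular geodesic from a lift $\tilde v_0$ of $v_0$ to $\tilde c$ projects to an arc $\alpha$ in $\Sigma_{g,n}^p$ for which the ``lollipop'' $\alpha \cdot c \cdot \alpha^{-1}$ represents $\gamma$ by construction. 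The final surgery, pushing the outgoing and incoming copies of $\alpha$ off to opposite sides so that they are disjoint except at $v_0$ and shortening the traversal of $c$ accordingly, turns this lollipop into the desired embedded simple loop through $v_0$ inside the annulus $N$.
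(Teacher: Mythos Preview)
Your overall strategy is reasonable, but the hyperbolic step does not deliver what you need. The perpendicular from $\tilde v_0$ to the axis $\tilde c$ is of course an embedded segment in $\HBolic^2$, but its projection $\alpha$ to $\Sigma_{g,n}^p$ need neither be embedded nor meet $c$ only at its endpoint: a self-intersection of $\alpha$ corresponds to the segment meeting a nontrivial deck translate of itself, and an extra crossing with $c$ corresponds to the segment meeting some $g\tilde c \neq \tilde c$. Nothing in your argument rules either of these out. (Picture $c$ as a very short geodesic and $v_0$ far away on the surface; the perpendicular arc is then long and will typically cross both itself and $c$.) Without an embedded $\alpha$ satisfying $\alpha \cap c = \{p\}$, the regular neighborhood $N$ of $\alpha \cup c$ is not an annulus, and your final push-off surgery does not yield a simple loop. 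You correctly flag this as the main obstacle, but the proposed fix does not overcome it; you have essentially restated the difficulty in hyperbolic language.

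The paper sidesteps the arc problem entirely. One first notes the easy fact that \emph{some} conjugate $\gamma'$ of $\gamma$ has a based simple representative (connect $v_0$ to $c$ by any short embedded arc and do your annulus trick; here no control on the homotopy class of the arc is required). Then, regarding $v_0$ as an extra puncture, the Dehn--Nielsen--Baer theorem says that the image of $\Mod_{g,n}^{p+1}$ in $\Aut(\pi_1(\Sigma_{g,n}^p,v_0))$ contains all inner automorphisms, so there is a homeomorphism $f$ fixing $v_0$ with $f_\ast(\gamma')=\gamma$. Since $f$ carries based simple closed curves to based simple closed curves, $\gamma$ has one too. In effect, the conjugating word is realized geometrically by a point-pushing homeomorphism rather than by an embedded arc, and that is exactly what lets the paper avoid the embeddedness issue you ran into.
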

\begin{proof}
The conditions imply that there is some $\gamma' \in \pi_1(\Sigma_{g,n}^p,v_0)$ that can be realized by
a based simple closed curve such that $\gamma'$ is conjugate to $\gamma$.  Puncturing $\Sigma_{g,n}^p$ at $v_0$,
the group $\Mod_{g,n}^{p+1}$ acts on $\pi_1(\Sigma_{g,n}^p,v_0)$.  The Dehn-Nielsen-Baer theorem (see, e.g.,
\cite{FarbMargalitBook}) 
implies that the image of the resulting map $\Mod_{g,n}^{p+1} \rightarrow \Aut(\pi_1(\Sigma_{g,n}^p,v_0))$
contains all inner automorphisms.  This implies that there is some $f \in \Mod_{g,n}^{p+1}$ such that
$f(\gamma') = \gamma$.  Since $\gamma'$ can be realized by a simple closed curve, we conclude that $\gamma$
can as well.
\end{proof}

\subsubsection{The curve complex}
\label{section:curvecomplex}

We will need some results about the curve complex.  This space, which 
was introduced by Harvey \cite{HarveyComplex}, plays a role in the study
of the mapping class group analogous to the role of the Tits building of an
algebraic group.  We start with the definition.

\begin{definition}
Fix $g,n,p \geq 0$.  A simple closed curve on $\Sigma_{g,n}^p$ is {\em nontrivial} if
it is not homotopic to a point, a puncture, or a boundary component.
The {\em curve complex} on $\Sigma_{g,n}^p$, denoted $\Curves_{g,n}^p$, is the simplicial complex whose
$k$-simplices are sets $\{\gamma_0,\ldots,\gamma_k\}$ of homotopy classes of nontrivial simple
closed curves on $\Sigma_{g,n}^p$ that can be realized disjointly.
\end{definition}

\noindent
We will also need the following space.

\begin{definition}
The {\em nonseparating curve complex},
denoted $\CNosep_{g,n}^p$, is the subcomplex of $\Curves_{g,n}^p$ consisting of simplices
$\{\gamma_0,\ldots,\gamma_k\}$ such that each $\gamma_i$ is the homotopy class of a nonseparating simple closed
curve.
\end{definition}

\noindent
For us, the key result about $\Curves_{g,n}^p$ is the following, which is due to Harer \cite{HarerStable}.

\begin{theorem}[\cite{HarerStable}]
\label{theorem:harercurvecpx}
Fix $g,n,p \geq 0$.  The space $\Curves_{g,n}^p$ is $(2g-2)$-connected if $n=p = 0$ and is
$(2g+n+p-3)$-connected if $n+p > 0$.
\end{theorem}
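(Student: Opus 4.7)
The plan is to prove both connectivity statements by simultaneous induction on the complexity of the surface, measured by $\xi(\Sigma_{g,n}^p) = 3g + n + p$. For the base cases, I would analyze surfaces of small complexity directly: e.g.\ when $g = 0$ and $n + p \leq 3$ or $g = n = p = 1$, the curve complex is empty and the assertion amounts to $(-1)$-connectedness, which is vacuous; for slightly larger $\xi$ the complex is discrete or easily seen to be connected by inspection of the few isotopy classes available.

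For the inductive step, let $k$ be at most the claimed connectivity bound and let $f : S^k \to \Curves_{g,n}^p$ be a continuous map. By simplicial approximation I would first make $f$ simplicial with respect to some triangulation $T$ of $S^k$, and then isotope representatives so that, for every simplex $\tau$ of $T$, the curves $\{f(v) : v \in \tau\}$ are realized disjointly and, moreover, all representatives chosen on $S^k$ are pairwise in minimal position. The key reduction move is the following. Pick a vertex $v_0$ of $T$ and consider the nonseparating curve complex on the surface $\Sigma'$ obtained by cutting $\Sigma_{g,n}^p$ along $\gamma_0 = f(v_0)$; since $\xi(\Sigma') < \xi(\Sigma_{g,n}^p)$, the inductive hypothesis applies to $\Curves(\Sigma')$ with the shifted connectivity bound. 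The link of $v_0$ in the image of $f$ factors through $\Curves(\Sigma')$, so I can coherently cone off a neighborhood of $f^{-1}(v_0)$ using a filling disk provided by induction. Iterating this over the vertices of $T$ yields an extension $D^{k+1} \to \Curves_{g,n}^p$, completing the induction.

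The main obstacle is twofold. First, the surgery move requires that I find a nonseparating curve in the image of $f$; if every vertex of $T$ maps to a separating curve, I must first homotope $f$ to introduce a nonseparating vertex, which uses the difference $(2g-2)$ versus $(2g+n+p-3)$ between the two asserted bounds (the separating-only case costs one dimension of connectivity per ``hole'' removed). Second, one must verify that the surgery can be made compatible simultaneously on all simplices of $T$ containing $v_0$, not just one at a time; this is the delicate combinatorial step, handled by choosing minimal-intersection representatives globally and using the bigon criterion. An alternative strategy, which avoids some of this technicality when $n + p > 0$, is to exploit the contractibility of the arc complex on a surface with at least one puncture or boundary component and to map the arc complex to $\Curves_{g,n}^p$ by taking boundaries of regular neighborhoods of arc systems; the fiber analysis of this map then yields the stated connectivity directly.
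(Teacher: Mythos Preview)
The paper does not give its own proof of this theorem: it is quoted as a result of Harer \cite{HarerStable} and used as a black box (together with Boggi's Theorem~\ref{theorem:boggicurvecpx}) in Step~1 of Lemma~\ref{lemma:curvestabsurject}. So there is nothing in the paper to compare your argument against.

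That said, a few comments on your sketch. The alternative you mention at the end---deducing the connectivity of $\Curves_{g,n}^p$ for $n+p>0$ from the contractibility of the arc complex---is in fact the heart of Harer's original argument, and is the part of your proposal that is closest to a real proof. Your primary strategy, however, has a genuine gap. After coning off near a single vertex $v_0$ using the inductive connectivity of $\Curves(\Sigma')$, you propose to ``iterate over the vertices of $T$''. This does not work as stated: once you have modified $f$ near $v_0$, the new map need not be in minimal position with respect to the next vertex you pick, and there is no reason the process terminates or even makes progress. The standard way to organize such an argument (as in Hatcher--Wahl or Ivanov's treatments) is not vertex-by-vertex but via a global ``bad simplex'' filtration or a PL Morse/coloring argument, and one must check that the \emph{descending links}, not just the links of single vertices, have the required connectivity. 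In particular, for $n+p>0$ the link of a nonseparating vertex in $\Curves_{g,n}^p$ is $\Curves_{g-1,n+2}^p$, whose inductive bound is $2g+n+p-3$, the \emph{same} as the target; one does not gain a dimension, so the naive link argument you describe is not sufficient on its own. You need either the arc-complex route or a more refined combinatorial setup.
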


\noindent
We will also need the following folklore lemma (see, e.g., \cite[Lemma A.2]{PutmanCutPaste}), which
while not strictly about the curve complex is in the same spirit.

\begin{lemma}
\label{lemma:connectcurves}
Fix $g \geq 1$ and $n,p \geq 0$.  
Let $\gamma$ and $\gamma'$ be simple closed nonseparating curves on $\Sigma_{g,n}^p$.  
There then exists a sequence
$\eta_1,\ldots,\eta_k$ of simple closed nonseparating curves on $\Sigma_{g,n}^p$ 
such that $\gamma = \eta_1$, such that
$\gamma' = \eta_k$, and such that $\eta_i$ and $\eta_{i+1}$ intersect exactly once for $1 \leq i < k$.
\end{lemma}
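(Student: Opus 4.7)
The plan is to argue by induction on the geometric intersection number $i(\gamma,\gamma')$, realized by representatives in minimal position. The case $i(\gamma,\gamma')=1$ is immediate with the sequence $\eta_1=\gamma$, $\eta_2=\gamma'$, so the real work lies in the disjoint base case ($i=0$) and the inductive step ($i\geq 2$).

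For the disjoint case, the goal is to exhibit a single nonseparating simple closed curve $\eta$ with $i(\eta,\gamma)=i(\eta,\gamma')=1$. Cut $\Sigma_{g,n}^p$ along $\gamma\cup\gamma'$ to obtain a (possibly disconnected) surface $\Sigma'$ whose boundary acquires four new circles, call them $\gamma^+,\gamma^-,\gamma'^+,\gamma'^-$. The idea is to pick two disjoint embedded arcs inside $\Sigma'$, one from $\gamma^+$ to $\gamma'^+$ and another from $\gamma^-$ to $\gamma'^-$. After regluing, their union becomes a simple closed curve $\eta$ crossing each of $\gamma,\gamma'$ transversely at a single point. A direct check then shows $\Sigma_{g,n}^p\setminus\eta$ is connected (each half of $\Sigma'$ cut along its arc stays connected, and the two halves are joined through the residual arcs of $\gamma$ and $\gamma'$), so $\eta$ is nonseparating.

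For the inductive step at intersection number $k\geq 2$, I would pick two successive intersection points $p_1,p_2$ along $\gamma'$, let $\alpha\subset\gamma'$ be the subarc between them, and let $\beta_1,\beta_2$ be the two arcs into which $\{p_1,p_2\}$ split $\gamma$. After a small isotopy, the two surgered curves $\gamma_j=\alpha\cup\beta_j$ for $j=1,2$ become simple, disjoint from $\gamma$, and satisfy $i(\gamma_j,\gamma')<k$. The relation $[\gamma]=[\gamma_1]+[\gamma_2]$ in $H_1(\Sigma_{g,n}^p;\Z/2)$, combined with nonseparation of $\gamma$, forces at least one $\gamma_j$ to be nonseparating. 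One then invokes the base case to produce a path from $\gamma$ to $\gamma_j$ and the inductive hypothesis to produce one from $\gamma_j$ to $\gamma'$, and concatenates.

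The hard part will be the $i=0$ base case: verifying both that the paired arcs exist and that the resulting $\eta$ is genuinely nonseparating. When $\gamma\cup\gamma'$ does not separate $\Sigma_{g,n}^p$, the cut surface $\Sigma'$ is connected and the arcs are straightforward to locate; when $\gamma\cup\gamma'$ does separate (so $\gamma,\gamma'$ are homologous up to sign), one must check that the two components of $\Sigma'$ each inherit one boundary circle coming from $\gamma$ and one coming from $\gamma'$, so that the two paired arcs can be placed one in each component. The assumption $g\geq 1$ is exactly what guarantees enough topological room in each component for the required arcs, and ensures nonseparating curves exist in the first place.
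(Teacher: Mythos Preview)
The paper does not supply a proof of this lemma; it is quoted as folklore with a reference to \cite[Lemma A.2]{PutmanCutPaste}. Your outline---induction on $i(\gamma,\gamma')$, with a surgery at two consecutive intersection points for the inductive step and a direct construction of a common dual curve in the disjoint case---is precisely the standard argument, and it goes through.

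Two small points worth tightening. In the disjoint case, for the two arcs in $\Sigma'$ to glue up to a \emph{closed} curve $\eta$ you must choose their endpoints on $\gamma^{\pm}$ (respectively on $\gamma'^{\pm}$) to be the two preimages of a single point of $\gamma$ (respectively $\gamma'$); say this explicitly and then observe that the ``find the first arc, cut, stay connected, find the second arc'' routine still works with prescribed endpoints. Second, once you have $i(\eta,\gamma)=1$ the nonseparation of $\eta$ is automatic: the algebraic intersection pairing gives $\langle[\eta],[\gamma]\rangle=\pm 1$, so $[\eta]\neq 0$ in $\HH_1$. This replaces your connectivity check entirely. (The hypothesis $g\ge 1$ only makes the statement nonvacuous; once $\gamma$ and $\gamma'$ exist, every piece of the cut surface already has room for the required arcs.)
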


\subsubsection{Some results on finite-index subgroups}

Here we collect some basic results on finite-index subgroups of the mapping class group.  We
start with the following example, which will play a small role in our proof.

\begin{definition}
The {\em level $L$ subgroup} of $\Mod_{g,n}^p(L)$, denoted $\Mod_{g,n}^p(L)$, is the kernel of the
action of $\Mod_{g,n}^p(L)$ on $\HH_1(\Sigma_{g,n}^p;\Z/L)$.
\end{definition}

\begin{remark}
There are other possible definitions of $\Mod_{g,n}^p(L)$ when $n+p \geq 2$, but the above suffices
for our purposes.
\end{remark}

Next, we will need the following theorem, which was proven independently by Bridson \cite{Bridson}
and the second author \cite{PutmanH1FiniteIndex}.

\begin{theorem}[{\cite{Bridson, PutmanH1FiniteIndex}}]
\label{theorem:dehntwistsvanish}
Fix $g \geq 3$ and $n,p \geq 0$.  Let $\Gamma$ be a finite-index subgroup of $\Mod_{g,n}^p$, let
$\gamma$ be a simple closed curve on $\Sigma_{g,n}^p$, and let $k \geq 1$ be such that
$T_{\gamma}^k \in \Gamma$.  Then the image of $T_{\gamma}^k$ in
$\HH_1(\Gamma;\Q)$ is zero.
\end{theorem}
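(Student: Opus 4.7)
The plan is to combine the transfer map with the lantern and braid relations, exploiting the vanishing $\HH_1(\Mod_{g,n}^p;\Q)=0$ for $g\ge 3$.

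First I would reduce to the case that $\Gamma$ is normal in $\Mod := \Mod_{g,n}^p$ by passing to the normal core $\Gamma_0$, which has finite index since $\Mod$ is finitely generated. Choosing $k'$ divisible by $k$ with $T_\gamma^{k'} \in \Gamma_0$, Lemma \ref{lemma:transfer} gives a surjection $\HH_1(\Gamma_0;\Q) \twoheadrightarrow \HH_1(\Gamma;\Q)$ sending $[T_\gamma^{k'}]$ to a nonzero rational multiple of $[T_\gamma^k]$, so we may assume $\Gamma \triangleleft \Mod$.

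For the nonseparating case, since $g \geq 3$ we can embed a four-holed sphere $\Sigma_{0,4} \hookrightarrow \Sigma_{g,n}^p$ so that $\gamma$ is one of its boundary curves $b_1, \ldots, b_4$ and all seven lantern curves (the $b_i$ together with the three interior curves $a, b, c$) are nonseparating in $\Sigma_{g,n}^p$ and hence lie in a single $\Mod$-orbit. Choose $N$ large enough that $T_\delta^N \in \Gamma$ for each of the seven $\delta$, and set $V := \HH_1(\Gamma;\Q)$ and $v_\delta := [T_\delta^N] \in V$. The lantern relation $T_{b_1} T_{b_2} T_{b_3} T_{b_4} = T_a T_b T_c$, combined with the pairwise commutativity of the disjoint twists $T_{b_i}$, yields in $V$ the identity
$$v_{b_1} + v_{b_2} + v_{b_3} + v_{b_4} = \bigl[(T_a T_b T_c)^N\bigr].$$
The plan is to show that all seven classes $v_\delta$ coincide with a common value $v$: the braid relation $T_\alpha T_\beta T_\alpha = T_\beta T_\alpha T_\beta$ for nonseparating $\alpha, \beta$ meeting once implies that $T_\alpha^N$ and $T_\beta^N$ are conjugate by a power of $T_\alpha T_\beta$ that eventually lies in $\Gamma$, and Lemma \ref{lemma:connectcurves} extends equality to arbitrary pairs of nonseparating curves. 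The normality of $\Gamma$, together with the vanishing $V_\Mod = \HH_1(\Mod;\Q) = 0$ (Lemma \ref{lemma:supertransfer}, valid for $g \geq 3$), lets us transfer such conjugacies inside $\Gamma$. Once the $v_\delta$ are shown to be equal, the right hand side $[(T_a T_b T_c)^N]$ likewise expands to $3v$ modulo commutator contributions that themselves lie in the span of differences $v_\delta - v_{\delta'}$, so the lantern becomes the scalar identity $4v = 3v$ in $V$, forcing $v = 0$ and hence $[T_\gamma^k] = 0$. For separating $\gamma$, a standard chain relation such as $(T_{c_1} \cdots T_{c_{2h}})^{4h+2} = T_\gamma$, applied in the one-holed genus-$h$ subsurface bounded by $\gamma$ and containing a chain $c_1, \ldots, c_{2h}$ of nonseparating curves, writes a power of $T_\gamma$ as a product of Dehn twists about nonseparating curves; the already-established nonseparating case together with the identity $[xy]=[x]+[y]$ in $V$ for $x,y\in\Gamma$ then gives $[T_\gamma^k] = 0$.

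The main obstacle is establishing $v_\alpha = v_\beta$ for two nonseparating curves $\alpha, \beta$ meeting once: the braid relation produces a conjugation by $T_\alpha T_\beta$, which is typically not in $\Gamma$. The key idea is to pass to a power $M$ with $(T_\alpha T_\beta)^M \in \Gamma$, follow the iterated $\Mod$-orbit of $v_\alpha$ under this conjugation, and use the normality of $\Gamma$ and the transitivity of $\Mod$ on nonseparating curves (together with the vanishing of $V_\Mod$) to chain together all classes arising in that orbit. The analogous expansion of $[(T_a T_b T_c)^N]$ into $3v$ rests on the same mechanism, since the interior lantern twists pairwise intersect and commute only up to commutators that must be controlled by the same orbit-averaging argument.
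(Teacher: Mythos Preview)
The paper does not give its own proof of this theorem: it is quoted from \cite{Bridson} and \cite{PutmanH1FiniteIndex} and used as input for Corollary~\ref{corollary:multitwistsvanish}. So there is no in-paper argument to compare against. That said, your overall architecture---pass to a normal subgroup, use a lantern in which all seven curves are nonseparating to force $4v=3v$, and reduce the separating case to the nonseparating one via a chain relation---is exactly the shape of the argument in \cite{PutmanH1FiniteIndex}.

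Where your proposal breaks down is precisely at the point you yourself identify as ``the main obstacle'': establishing $v_\alpha=v_\beta$ for nonseparating $\alpha,\beta$ with $i(\alpha,\beta)=1$. The braid relation does give $hT_\alpha h^{-1}=T_\beta$ for $h=T_\alpha T_\beta$, but passing to a power $h^M\in\Gamma$ does \emph{not} conjugate $T_\alpha^N$ to $T_\beta^N$; it conjugates it to $T_{h^M(\alpha)}^N$, and $h^M(\alpha)$ is typically neither $\alpha$ nor $\beta$ (in the one-holed torus spanned by $\alpha\cup\beta$ the element $h$ has order~$6$ modulo the boundary twist, so for $6\mid M$ you simply recover $\alpha$). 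Your appeal to $(\HH_1(\Gamma;\Q))_{\Mod}\cong\HH_1(\Mod;\Q)=0$ does not rescue this: vanishing of the coinvariants says only that every vector is a sum of terms of the form $x-g\cdot x$; it does \emph{not} say the $\Mod$-action on $\HH_1(\Gamma;\Q)$ is trivial, which is what you would need to turn a conjugacy in $\Mod$ into an equality in $\HH_1(\Gamma;\Q)$.

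The same defect contaminates your treatment of the right-hand side of the lantern. The element $(T_aT_bT_c)^N(T_c^{N}T_b^{N}T_a^{N})^{-1}$ lies in the commutator subgroup of $\langle T_a,T_b,T_c\rangle\subset\Mod$, not in $[\Gamma,\Gamma]$, so there is no reason its class in $\HH_1(\Gamma;\Q)$ should vanish or lie in the span of the differences $v_\delta-v_{\delta'}$. Thus the asserted identity $4v=3v$ is not justified as written. The published proofs close this gap by a different mechanism (arranging genuine conjugacies \emph{inside} $\Gamma$, or equivalently arguing with pairs of disjoint curves rather than intersecting ones), not by averaging or by invoking the vanishing of coinvariants.
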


\noindent
In fact, we will need the following small extension of Theorem \ref{theorem:dehntwistsvanish}.  

\begin{corollary}
\label{corollary:multitwistsvanish}
Fix $g \geq 3$ and $n,p \geq 0$.  Let $\Gamma$ be a finite-index subgroup of $\Mod_{g,n}^p$ and let
$M \in \Mod_{g,n}^p$ be a multitwist such that $M \in \Gamma$.  Then the image of $M$ in
$\HH_1(\Gamma;\Q)$ is zero.
\end{corollary}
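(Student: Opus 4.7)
The plan is to reduce to Theorem \ref{theorem:dehntwistsvanish} by passing to a high power of $M$ in which the individual twist factors separately lie in $\Gamma$, and then using that $\HH_1(\Gamma;\Q)$ is torsion-free.

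Write $M = T_{\gamma_1}^{k_1} \cdots T_{\gamma_m}^{k_m}$ with the $\gamma_i$ disjoint. Since $\Gamma$ has finite index in $\Mod_{g,n}^p$, each Dehn twist $T_{\gamma_i}$ has some power lying in $\Gamma$; let $N \geq 1$ be a common multiple of these powers, so that $T_{\gamma_i}^{N} \in \Gamma$ for every $i$. Because the curves $\gamma_1,\ldots,\gamma_m$ are pairwise disjoint, the twists $T_{\gamma_1},\ldots,T_{\gamma_m}$ pairwise commute. Therefore
$$M^N = T_{\gamma_1}^{N k_1} \cdots T_{\gamma_m}^{N k_m},$$
and each factor $T_{\gamma_i}^{N k_i} = (T_{\gamma_i}^{N})^{k_i}$ lies in $\Gamma$.

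Now apply Theorem \ref{theorem:dehntwistsvanish} to each factor: the image of $T_{\gamma_i}^{N k_i}$ in $\HH_1(\Gamma;\Q)$ vanishes. Summing (homology classes add under products in the group), the image of $M^N$ in $\HH_1(\Gamma;\Q)$ also vanishes, i.e.\ $N \cdot [M] = 0$ in $\HH_1(\Gamma;\Q)$. Since $\HH_1(\Gamma;\Q)$ is a $\Q$-vector space, multiplication by $N$ is injective, so $[M] = 0$.

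There is no real obstacle here; the only thing to verify is the elementary but essential observation that disjoint curves produce commuting Dehn twists, which is what lets the power $M^N$ split into the individually-containable factors $T_{\gamma_i}^{Nk_i}$. Once that is noted, the corollary follows from Theorem \ref{theorem:dehntwistsvanish} together with the fact that $\HH_1(\Gamma;\Q)$ is torsion-free.
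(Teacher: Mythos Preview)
Your proof is correct and follows essentially the same approach as the paper: pass to a high enough power so that each individual twist factor lies in $\Gamma$, apply Theorem~\ref{theorem:dehntwistsvanish} to each factor, and conclude using that $\HH_1(\Gamma;\Q)$ is torsion-free. The only cosmetic difference is that the paper chooses a single exponent $K$ with $T_{\gamma_i}^{K k_i}\in\Gamma$ directly, whereas you choose $N$ with $T_{\gamma_i}^{N}\in\Gamma$ and then observe $T_{\gamma_i}^{N k_i}=(T_{\gamma_i}^{N})^{k_i}\in\Gamma$; these are equivalent.
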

\begin{proof}
If $M = T_{\gamma_1}^{k_1} \cdots T_{\gamma_m}^{k_m}$ is a multitwist, then for some $K \geq 1$ we
have $T_{\gamma_i}^{K k_i} \in \Gamma$ for $1 \leq i \leq m$.  Since the $T_{\gamma_i}$ commute, we have
$M^K = T_{\gamma_1}^{K k_1} \cdots T_{\gamma_m}^{K k_m}$.  Theorem \ref{theorem:dehntwistsvanish} says
that the image of $T_{\gamma_i}^{K k_i}$ in $\HH_1(\Gamma;\Q)$ vanishes for $1 \leq i \leq m$, so the image
of $M^K$ (and thus $M$) does as well.
\end{proof}

The final theorem we need is the following deep result of Boggi
(cf.~\cite[Lemma 2.6]{ArbarelloCornalba}).

\begin{theorem}[{\cite[Lemma 5.5]{BoggiCongruence}}]
\label{theorem:boggicurvecpx}
Let $\Gamma$ be a finite-index subgroup of $\Mod_{g,n}^p$.  Then $\HH_k(\Curves_{g,n}^p/\Gamma;\Q) = 0$
for $1 \leq k \leq 2g-2$ if $n+p=0$ and for $1 \leq k \leq 2g+n+m-3$ if $n+p > 0$.
\end{theorem}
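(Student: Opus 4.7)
The plan is to interpret $\Curves_{g,n}^p/\Gamma$ algebro-geometrically as the dual complex of the boundary divisor of a compactification of a moduli space, and then extract the desired vanishing from Deligne's theory of weights on the cohomology of smooth complex varieties. Concretely, one realizes the $\Gamma$-level moduli stack $\mathcal{M}_\Gamma$ of type-$\Sigma_{g,n}^p$ curves as a smooth Deligne--Mumford stack sitting inside a smooth compactification $\overline{\mathcal{M}}_\Gamma$ whose boundary $D_\Gamma = \overline{\mathcal{M}}_\Gamma \setminus \mathcal{M}_\Gamma$ is a (stacky) simple normal crossings divisor. The key geometric observation is that the irreducible components of $D_\Gamma$ are indexed by $\Gamma$-orbits of isotopy classes of nontrivial simple closed curves on $\Sigma_{g,n}^p$, with higher intersections corresponding to $\Gamma$-orbits of simplices of $\Curves_{g,n}^p$; thus, up to orbifold issues controlled by passing to a torsion-free finite-index subgroup of $\Gamma$, the dual complex $\Delta(D_\Gamma)$ is canonically identified with $\Curves_{g,n}^p/\Gamma$.

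Next, I would invoke Deligne's classical result that, for a smooth variety $U$ with smooth compactification $X$ and simple normal crossings boundary $D = X \setminus U$, the reduced rational cohomology of the dual complex $\Delta(D)$ computes the weight-zero graded piece of $\HH^\ast(U;\Q)$:
$$\widetilde{\HH}^{k-1}(\Delta(D);\Q) \cong \operatorname{Gr}_0^W \HH^k(U;\Q).$$
Applied to $U = \mathcal{M}_\Gamma$, this reduces the theorem to showing $\operatorname{Gr}_0^W \HH^k(\mathcal{M}_\Gamma;\Q) = 0$ in the claimed range (the case $k=1$ being mere connectivity of $\Curves_{g,n}^p/\Gamma$, which follows from connectivity of $\Curves_{g,n}^p$ itself). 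To establish this weight-zero vanishing, I would use Serre's lemma to pick a torsion-free normal finite-index subgroup $\Gamma' \subset \Gamma$, apply the Deligne isomorphism to the smooth scheme $\mathcal{M}_{\Gamma'}$, and combine Harer's Theorem \ref{theorem:harercurvecpx} with an equivariant argument (in the spirit of Theorem \ref{theorem:equivariant}) for the action of $\Gamma/\Gamma'$ on the finite cover $\Curves_{g,n}^p/\Gamma' \to \Curves_{g,n}^p/\Gamma$ to control the weight-zero cohomology of $\mathcal{M}_\Gamma$ itself, using that the weight filtration is functorial under finite \'etale maps.

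The main obstacle, and the reason weights are genuinely needed, is that a naive topological transfer argument fails: the action of the finite group $\Mod_{g,n}^p/\Gamma$ on $\Curves_{g,n}^p$ has plentiful fixed points (stabilizers of multicurves), so the high connectivity of the total space supplied by Harer's theorem does not automatically descend to rational vanishing of the quotient by ordinary means. It is the algebraic rigidity of the weight filtration --- not visible to purely topological transfer --- that rules out contributions from these fixed loci, and this is the reason Deligne's machinery is unavoidable. A secondary technical obstacle is the clean identification of $\Delta(D_\Gamma)$ with $\Curves_{g,n}^p/\Gamma$, which requires careful bookkeeping of separating versus nonseparating boundary strata and of the stacky self-intersections that arise when an irreducible component of $D_\Gamma$ meets itself.
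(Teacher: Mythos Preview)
The paper does not itself prove this theorem; it is quoted from Boggi \cite{BoggiCongruence}, with the remark that the argument uses Deligne's theory of weights. Your overall strategy --- identify $\Curves_{g,n}^p/\Gamma$ with the dual complex of the boundary divisor of a smooth compactification $\overline{\mathcal{M}}_\Gamma$ of the level moduli space, then read off the vanishing from weight theory --- is exactly Boggi's method, so at the level of architecture you are on target.

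The execution, however, contains a genuine error that propagates into the rest of the plan. The displayed isomorphism $\widetilde{\HH}^{k-1}(\Delta(D);\Q) \cong \operatorname{Gr}_0^W \HH^k(U;\Q)$ is false: for smooth $U$ the group $\HH^k(U;\Q)$ carries only weights $\geq k$, so the right-hand side vanishes for every $k\geq 1$, which would force every such dual complex to be $\Q$-acyclic (already $U=\mathbb{G}_m \subset \mathbb{P}^1$ contradicts this). The correct identification is with compactly supported cohomology, $\widetilde{\HH}^{k-1}(\Delta(D);\Q) \cong W_0\,\HH^k_c(U;\Q)$, or Poincar\'e-dually with the top-weight piece $\operatorname{Gr}_{2d}^W \HH^{2d-k}(U;\Q)$ where $d=\Dim U$. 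Note that your proposal is already internally inconsistent here: were your formula right, smoothness alone would finish the argument and there would be nothing left for Harer's theorem to do.

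With the correct formula the vanishing is no longer free, and the missing ingredient is not Theorem~\ref{theorem:harercurvecpx} but rather Harer's bound on the virtual cohomological dimension of $\Mod_{g,n}^p$: once $\HH^{j}(\mathcal{M}_\Gamma;\Q)=0$ for $j$ above the vcd, Poincar\'e duality kills $W_0\,\HH^k_c(\mathcal{M}_\Gamma;\Q)$ for $k$ below $2d-\mathrm{vcd}$, which is exactly the stated range. Your instinct that Harer's work enters is therefore correct, but it enters as a cohomological-dimension bound on the open moduli space, not through any equivariant or transfer argument on the map $\Curves_{g,n}^p/\Gamma' \to \Curves_{g,n}^p/\Gamma$. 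The reduction to torsion-free $\Gamma'$ and the descent back to $\Gamma$ by taking $\Gamma/\Gamma'$-invariants in rational cohomology are both fine and require no weight-theoretic input.
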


\begin{remark}
The proof of the main theorem of \cite{BoggiCongruence} has a fatal flaw, but the proof
of Theorem \ref{theorem:boggicurvecpx} is correct.  See \cite{BoggiReview} for details.
\end{remark}

\begin{remark}
Harer \cite{HarerStable} also proved that $\CNosep_{g,n}^p$ is $(g-2)$-connected, and one
can extract from Boggi's paper \cite{BoggiCongruence} a proof that $\HH_k(\CNosep_{g,n}^p/\Gamma;\Q) = 0$
for $1 \leq k \leq g-2$ and $\Gamma$ a finite-index subgroup of $\Mod_{g,n}^p$.  Using
this, we could simplify our proof of Lemma \ref{lemma:curvestabsurject}; however, we have
instead chosen to avoid it by using an elementary topological argument.
\end{remark}

\section{The mapping class group (conditionally) does not virtually surject onto $\Z$}
\label{section:bigdirection}

In this section, we prove that Conjecture \ref{conjecture:actionconjecture} 
implies Conjecture \ref{conjecture:ivanov}.  The actual proof is contained in
\S \ref{section:theproof} (see Theorem \ref{theorem:actionimpliesivanov}).  
This is proceeded by \S \ref{section:capping} and
\S \ref{section:curvestab}, which prove two necessary lemmas.
The proof has some annoying features when $g=3$.  To avoid having to deal with
this case separately, we make the following definition.

\begin{definition}
Fix $g,n,p \geq 0$, and let $\Gamma$ be a subgroup of $\Mod_{g,n}^p$.  Let
$T(\Gamma)$ be the subgroup of $\Gamma$ generated by the set
$$\Set{$M$}{$M \in \Mod_{g,n}^p$ is a multitwist, $M \in \Gamma$}$$
and define $\HHH_1(\Gamma;\Q) = \HH_1(\Gamma/T(\Gamma);\Q)$.
\end{definition}

\begin{remark}
If $g \geq 3$ and $\Gamma$ is finite-index, then Corollary \ref{corollary:multitwistsvanish} 
implies that $\HHH_1(\Gamma;\Q) = \HH_1(\Gamma;\Q)$.
\end{remark}

\subsection{Filling in punctures}
\label{section:capping}

In this section, we prove the following lemma, which says that the portion of a finite-index
subgroup $\Gamma < \Mod_{g,n}^p$ living in the point-pushing subgroup (i.e.\ the kernel of the 
Birman exact sequence) goes to zero in $\HHH_1(\Gamma;\Q)$.

\begin{lemma}
\label{lemma:fillpuncture}
Fix $g \geq 2$ and $n \geq 0$ and $p \geq 1$.  
Assume that Conjecture \ref{conjecture:actionconjecture} holds for $\Sigma_{g,n}^{p-1}$.  
Let $\Gamma < \Mod_{g,n}^p$ be a finite-index subgroup.  Fix a puncture
of $\Sigma_{g,n}^p$ and thus via the Birman exact sequence a point-pushing subgroup 
$\pi_1(\Sigma_{g,n}^{p-1}) < \Mod_{g,n}^p$.
Then the map $\HH_1(\Gamma \cap \pi_1(\Sigma_{g,n}^{p-1});\Q) \rightarrow \HHH_1(\Gamma;\Q)$ is the zero map.
\end{lemma}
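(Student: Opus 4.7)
The plan is to pass from $K:=\Gamma\cap\pi_1(\Sigma_{g,n}^{p-1})$ to a finite-index \emph{characteristic} subgroup $K'\triangleleft\pi_1(\Sigma_{g,n}^{p-1})$ contained in $K$, then run the argument through the higher Prym representation $V_{K'}$, handling the boundary subspace by hand using that it consists of multitwists. Write $P=\pi_1(\Sigma_{g,n}^{p-1})$. Since $P$ is residually finite such a $K'$ exists (e.g.\ intersect all subgroups of $P$ of index $[P:K]$). Because $P\triangleleft\Mod_{g,n}^p$ and $K'$ is characteristic in $P$, $K'$ is normal in $\Mod_{g,n}^p$ and in $\Gamma$. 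By Lemma \ref{lemma:transfer}, $\HH_1(K';\Q)\twoheadrightarrow\HH_1(K;\Q)$, so it suffices to prove $\HH_1(K';\Q)\to\HHH_1(\Gamma;\Q)$ is zero.

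Since $K'\triangleleft\Gamma$, the Hochschild--Serre five-term exact sequence identifies the image of $\HH_1(K';\Q)\to\HH_1(\Gamma;\Q)$ with the image of the coinvariants $(\HH_1(K';\Q))_\Gamma$. Taking $\Gamma$-coinvariants of $0\to B\to\HH_1(K';\Q)\to V_{K'}\to 0$ (which is right exact) reduces the task to proving (i) $(V_{K'})_\Gamma=0$, and (ii) the image of $B$ in $\HHH_1(\Gamma;\Q)$ is zero.

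For (i): the assumed Conjecture \ref{conjecture:actionconjecture} for $\Sigma_{g,n}^{p-1}$ says every nonzero $v\in V_{K'}$ has infinite $\Mod_{g,n}^p$-orbit, so a $\Gamma$-fixed vector (whose $\Mod_{g,n}^p$-orbit would have size at most $[\Mod_{g,n}^p:\Gamma]$) must be zero; that is, $(V_{K'})^\Gamma=0$. The introduction remarks that $V_{K'}$ carries a $\Mod_{g,n}^p$-invariant nondegenerate symplectic pairing, giving a $\Gamma$-module isomorphism $V_{K'}\cong V_{K'}^\ast$, and Lemma \ref{lemma:selfdualcoinv} then yields $(V_{K'})_\Gamma\cong((V_{K'})^\Gamma)^\ast=0$.

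For (ii): $B$ is spanned by classes of loops $\gamma$ encircling the boundaries and punctures of the cover $S$ of $\Sigma_{g,n}^{p-1}$ corresponding to $K'$. Viewed as an element of $K'\subseteq P$, each such $\gamma$ is $P$-conjugate to a power of a simple loop $\delta$ around a boundary or puncture of $\Sigma_{g,n}^{p-1}$. Lemma \ref{lemma:birmansimple} applied to $\delta$ (which is realizable as a based simple closed curve by Lemma \ref{lemma:conjugatesimple}) shows $\delta$ maps under the Birman embedding $P\hookrightarrow\Mod_{g,n}^p$ to a multitwist; since taking powers and conjugates of a multitwist produces a multitwist, the image of $\gamma$ is a multitwist as well. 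Because $\gamma\in K'\subseteq\Gamma$, this multitwist lies in $T(\Gamma)$, so $[\gamma]$ vanishes in $\HHH_1(\Gamma;\Q)=\HH_1(\Gamma/T(\Gamma);\Q)$. The main obstacle I foresee is step (i): converting the \emph{invariants} information supplied by Conjecture \ref{conjecture:actionconjecture} into the \emph{coinvariants} statement actually needed. The symplectic self-duality of $V_{K'}$ combined with Lemma \ref{lemma:selfdualcoinv} supplies the bridge, but this relies crucially on having chosen $K'$ characteristic, so that the pairing-preserving $\Mod_{g,n}^p$-action on $V_{K'}$ is available.
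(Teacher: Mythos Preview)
Your proof is correct and follows essentially the same approach as the paper's: pass to a characteristic finite-index subgroup $K'$, kill the boundary subspace $B$ by recognizing its generators as (conjugates of powers of) multitwists via Lemmas~\ref{lemma:birmansimple} and~\ref{lemma:conjugatesimple}, and then use the symplectic self-duality of $V_{K'}$ together with Lemma~\ref{lemma:selfdualcoinv} to convert the vanishing of invariants (from Conjecture~\ref{conjecture:actionconjecture}) into vanishing of coinvariants. The only differences are organizational---the paper handles $B$ first (its Step~1) and $(V_K)_\Gamma$ second (its Step~2), and writes out the covering-space argument for the boundary generators in more detail---but the substance is the same.
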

\begin{proof}
Let $K$ be a finite-index characteristic subgroup of $\pi_1(\Sigma_{g,n}^{p-1})$ such that
$K < \Gamma \cap \pi_1(\Sigma_{g,n}^{p-1})$.  For example, $K$ could be the intersection
of all index $[\pi_1(\Sigma_{g,n}^{p-1}):\Gamma \cap \pi_1(\Sigma_{g,n}^{p-1})]$ subgroups
of $\pi_1(\Sigma_{g,n}^{p-1})$.  Lemma \ref{lemma:transfer} says that
the map $\HH_1(K;\Q) \rightarrow \HH_1(\Gamma \cap \pi_1(\Sigma_{g,n}^{p-1};\Q)$ is surjective,
so it is enough to show that the map $\HH_1(K;\Q) \rightarrow \HHH_1(\Gamma;\Q)$ is the zero map.
This will have two steps.

\BeginSteps
\begin{step}
The map $\HH_1(K;\Q) \rightarrow \HHH_1(\Gamma;\Q)$ factors through $V_K$.  
\end{step}

Let $\rho : S \rightarrow \Sigma_{g,n}^{p-1}$ be the finite cover of $\Sigma_{g,n}^{p-1}$
corresponding to $K$ and let $B < \HH_1(K;\Q) \cong \HH_1(S;\Q)$ be the boundary subspace.  Thus
by definition $V_K = \HH_1(K;\Q)/B$, so we must show that $B$ goes to $0$ in
$\HHH_1(\Gamma;\Q)$.
To do this, we must be careful with basepoints.  Let $v$ be the basepoint in $\Sigma_{g,n}^{p-1}$ and 
let $v_S \in S$ be the lift of $v$ to $S$ such that the image of the map
$\rho_{\ast} : \pi_1(S,v_S) \rightarrow \pi_1(\Sigma_{g,n}^{p-1},v)$
is $K$.

Let $\delta_S$ be either a boundary component of $S$ or a simple closed curve in $S$ that
is freely homotopic to a puncture of $S$.  The subspace $B$ is generated by the
homology classes of such $\delta_S$.  Chasing through
the definitions, it is enough to find some $\eta_S \in \pi_1(S,v_S)$ with the following
two properties.
\begin{itemize}
\item $\eta_S$ is freely homotopic to $\delta_S$.
\item Let
$$\eta = \rho_{\ast}(\eta_S) \in K < \pi_1(\Sigma_{g,n}^{p-1},v).$$ 
We then want the element of $\Gamma < \Mod_{g,n}^p$ corresponding to $\eta$ to be a multitwist.
\end{itemize}
By Lemma \ref{lemma:birmansimple}, this second property will
follow if we can show that $\eta = \mu^k$ for some $\mu \in \pi_1(\Sigma_{g,n}^{p-1},v)$ that can
be realized by a based simple closed curve.

Pick a point $q_S \in \delta_S$.  Let $\gamma_1^S$ be a path in $S$ from $v_S$ to $q_S$ and let
$\gamma_2^S$ be an embedded $q_S$-based loop in $S$ that goes once around $\delta_S$ in the positive direction.  
Define $\eta_S \in \pi_1(S,v_S)$ to be the homotopy class of the path 
$\gamma_1^S \cdot \gamma_2^S \cdot \overline{\gamma}_1^S$.  Clearly $\eta_S$ is freely homotopic to $\delta_S$.

Let $\eta = \rho_{\ast}(\eta_S)$ and $\delta = \rho(\delta_S)$.  If 
$\delta_S$ is a boundary component of $S$, then $\delta$ is a boundary
component of $\Sigma_{g,n}^{p-1}$.  If $\delta_S$ is freely homotopy to a puncture of $S$, then after possibly
modifying $\delta_S$ by a homotopy, we can assume that $\delta$ is a simple closed 
curve freely homotopic to a puncture of $\Sigma_{g,n}^{p-1}$.
The map $\rho|_{\delta_S} : \delta_S \rightarrow \delta$ is a $k$-fold cover for some $k \geq 1$.  
Let $q = \rho(q_S) \in \delta$ and $\gamma_1 = \rho_{\ast}(\gamma_1^S)$, so $\gamma_1$ is a path in 
$\Sigma_{g,n}^{p-1}$ from $v$ to $q$.  Also, let $\gamma_2$ be an embedded $q$-based loop in $\Sigma_{g,n}^{p-1}$
that goes once around $\delta$ in the positive direction.  By the above, we have that
$$\eta = \rho_{\ast}(\gamma_1^S) \cdot \rho_{\ast}(\gamma_2^S) \cdot \rho_{\ast}(\overline{\gamma}_1^S) = \gamma_1 \cdot \gamma_2^k \cdot \overline{\gamma}_1 = (\gamma_1 \cdot \gamma_2 \cdot \overline{\gamma}_1)^k.$$
Setting 
$$\mu = \gamma_1 \cdot \gamma_2 \cdot \overline{\gamma}_1 \in \pi_1(\Sigma_{g,n}^{p-1},v),$$
the curve $\mu$ is freely homotopic to the simple closed curve $\gamma_2$, so by Lemma
\ref{lemma:conjugatesimple} we get that $\mu$ can in fact be realized by a based simple closed
curve, as desired.

\begin{step}
The induced map $V_K \rightarrow \HH_1(\Gamma;\Q)$ is the zero map.
\end{step}

Since the conjugation action of $\Gamma$ on itself induces the trivial action on $\HH_1(\Gamma;\Q)$,
the map in question factors through $(V_K)_{\Gamma}$.  We will prove that
$(V_K)_{\Gamma}=0$.  Since we are assuming that Conjecture \ref{conjecture:actionconjecture} 
holds for $\Sigma_{g,n}^{p-1}$, we know that $\Gamma$ does not fix any nonzero vector in $V_K$, i.e.\ that
$(V_K)^{\Gamma}=0$.  
As in Step 1, let $S$ be the cover of $\Sigma_{g,n}^{p-1}$ corresponding to $K$.  Observe that
$V_K \cong \HH_1(S';\Q)$, where $S'$ is the closed surface obtained from $S$ 
by filling in all of its punctures and gluing discs to all of its
boundary components.  In particular, $V_K$ is naturally isomorphic to its dual via
the algebraic intersection pairing, so by 
Lemma \ref{lemma:selfdualcoinv} we can conclude that $(V_K)_{\Gamma} = 0$, as desired.
\end{proof}

\subsection{Everything is contained in nonseparating curve stabilizers}
\label{section:curvestab}

In this section, we prove Lemma \ref{lemma:curvestabsurject} below, which says that if $\Gamma < \Mod_{g,n}^p$
is a finite-index subgroup, then $\HH_1(\Gamma;\Q)$ is ``concentrated'' in the stabilizers of
nonseparating simple closed curves.  We need the following definition.

\begin{definition}
Let $\Gamma$ be a subgroup of $\Mod_{g,n}^p$.
\begin{itemize}
\item If $\gamma$ is a simple closed curve on $\Sigma_{g,n}^p$, then let
$$\Gamma_{\gamma} = \Set{$g \in \Gamma$}{$g$ fixes the isotopy class of $\gamma$}.$$
\item If $S$ is a subsurface of $\Sigma_{g,n}^p$, then let $\Gamma_S$ be the intersection
of $\Gamma$ with the image of the natural map $\Mod(S) \rightarrow \Mod_{g,n}^p$.
\end{itemize}
\end{definition}

\noindent
We can now state our lemma.

\begin{lemma}
\label{lemma:curvestabsurject}
Fix $g \geq 3$ and $n,p \geq 0$.  Assume that Conjecture \ref{conjecture:actionconjecture} holds for 
$\Sigma_{g-1,n+1}^p$.  Let $\Gamma < \Mod_{g,n}^p$ 
be a finite-index subgroup and let $\gamma$ be a nonseparating simple closed curve on $\Sigma_{g,n}^p$.
Then the natural map $\HH_1(\Gamma_{\gamma};\Q) \rightarrow \HH_1(\Gamma;\Q)$ is surjective.
\end{lemma}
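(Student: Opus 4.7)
The plan is to apply Theorem \ref{theorem:equivariant} to a finite-index subgroup of $\Gamma$ acting on the curve complex, and then show that each vertex contribution can be absorbed into the image coming from $\Gamma'_\gamma$ by invoking the hypothesis on higher Prym representations. By Lemma \ref{lemma:transfer}, for any finite-index $\Gamma' \leq \Gamma$ the induced map $\HH_1(\Gamma';\Q) \to \HH_1(\Gamma;\Q)$ is surjective; moreover $\Gamma'_\gamma \leq \Gamma_\gamma$. Hence it suffices to prove the statement with $\Gamma$ replaced by $\Gamma'$, and I would take $\Gamma' = \Gamma \cap \Mod_{g,n}^p(L)$ for some $L \geq 3$, so that $\Gamma'$ acts on $\Curves_{g,n}^p$ without rotations.

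Next, Theorem \ref{theorem:harercurvecpx} gives $\HH_1(\Curves_{g,n}^p;\Q) = 0$ (since $g \geq 3$), and Theorem \ref{theorem:boggicurvecpx} gives $\HH_1(\Curves_{g,n}^p/\Gamma';\Q) = 0$, so Theorem \ref{theorem:equivariant} yields the surjection $\bigoplus_v \HH_1(\Gamma'_v;\Q) \to \HH_1(\Gamma';\Q)$. It therefore remains to prove, for every vertex $v$ of $\Curves_{g,n}^p$, that the image of $\HH_1(\Gamma'_v;\Q)$ in $\HH_1(\Gamma';\Q)$ lies in the image of $\HH_1(\Gamma'_\gamma;\Q)$.

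For a nonseparating $v$, Lemma \ref{lemma:connectcurves} produces a sequence of nonseparating curves $v = v_0, v_1, \ldots, v_k = \gamma$ with consecutive pairs intersecting in one point, reducing the problem to: for nonseparating $\eta, \eta'$ with $|\eta \cap \eta'| = 1$, the images of $\HH_1(\Gamma'_\eta;\Q)$ and $\HH_1(\Gamma'_{\eta'};\Q)$ in $\HH_1(\Gamma';\Q)$ agree. Note that a regular neighborhood $R$ of $\eta \cup \eta'$ is a one-holed torus with separating boundary $\delta$, and the complement $R^c$ is homeomorphic to $\Sigma_{g-1,n+1}^p$, precisely the surface appearing in the hypothesis. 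For a separating $v$, the assumption $g \geq 3$ allows one to choose a nonseparating curve $w$ disjoint from $v$ on the positive-genus side of $v$; the stabilizer $\Gamma'_v$ is generated modulo $T_v$ (which vanishes in $\HH_1$ by Corollary \ref{corollary:multitwistsvanish}) by mapping classes supported in each side of $v$, and an elementary topological argument iterating the nonseparating reduction within the sides of $v$ shows that the image of $\HH_1(\Gamma'_v;\Q)$ is contained in the image of $\HH_1(\Gamma'_w;\Q)$.

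The main obstacle is the nonseparating-pair reduction. The change-of-coordinates principle produces an element of $\Mod(R)$ sending $\eta$ to $\eta'$, but it need not lie in $\Gamma'$, so one cannot simply conjugate. Instead, I would work inside $\Gamma'_\delta$: collapsing $R$ to a punctured disk gives a map from $(\Mod_{g,n}^p)_\delta$ to $\Mod(\Sigma_{g-1,n}^{p+1})$ whose kernel is generated by multitwists together with the point-pushing subgroup $\pi_1(\Sigma_{g-1,n}^p)$ coming from Birman's exact sequence. The hypothesis, together with the monotonicity propagating Conjecture \ref{conjecture:actionconjecture} from $\Sigma_{g-1,n+1}^p$ down to $\Sigma_{g-1,n}^p$, allows Lemma \ref{lemma:fillpuncture} to kill the point-pushing contribution in $\HHH_1$. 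Combined with the vanishing of the multitwists (Corollary \ref{corollary:multitwistsvanish}), this identifies the images of $\HH_1(\Gamma'_\eta;\Q)$ and $\HH_1(\Gamma'_{\eta'};\Q)$ in $\HH_1(\Gamma';\Q)$, completing the proof.
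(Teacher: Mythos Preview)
Your overall architecture matches the paper's: pass to $\Gamma' = \Gamma \cap \Mod_{g,n}^p(L)$, apply Theorem~\ref{theorem:equivariant} using Theorems~\ref{theorem:harercurvecpx} and~\ref{theorem:boggicurvecpx}, reduce separating vertex stabilizers to nonseparating ones, and then use Lemma~\ref{lemma:connectcurves} to show that all nonseparating stabilizers have the same image. The separating case in the paper is handled by an induction on $n+p$ (needed when one side of the separating curve has genus~$0$); your sketch gestures at this but does not make the induction explicit.

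The substantive gap is in your nonseparating-pair reduction. You propose to work inside $\Gamma'_\delta$, where $\delta = \partial R$, via the map $(\Mod_{g,n}^p)_\delta \to \Mod(\Sigma_{g-1,n}^{p+1})$ obtained by collapsing $R$. There are two problems. First, the kernel of this map is \emph{not} generated by multitwists together with a point-pushing subgroup: it contains the entire mapping class group $\Mod(R) \cong \Mod_{1,1}$ of the one-holed torus, which is certainly not a multitwist group. Second, and more fundamentally, neither $\Gamma'_\eta$ nor $\Gamma'_{\eta'}$ is a subgroup of $\Gamma'_\delta$: an element fixing $\eta$ can move $\eta'$ freely and hence move $\delta$. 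So analyzing $\Gamma'_\delta$ does not, by itself, say anything about the images of $\HH_1(\Gamma'_\eta;\Q)$ and $\HH_1(\Gamma'_{\eta'};\Q)$ in $\HH_1(\Gamma';\Q)$.

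The paper avoids both issues by instead using the subgroup $\Gamma_S$ where $S = R^c \cong \Sigma_{g-1,n+1}^p$, which \emph{is} contained in both $\Gamma_\eta$ and $\Gamma_{\eta'}$ (anything supported on $R^c$ fixes everything in $R$). The key computation (Step~3 of the paper's proof) shows that $\HHH_1(\Gamma_S;\Q) \to \HHH_1(\Gamma_\eta;\Q)$ is surjective. This is done by cutting along $\eta$ itself (not along $\delta$), capping one of the two new boundary components with a punctured disc, and then filling in that puncture; each of these three moves induces an isomorphism on $\HHH_1$. The relevant Birman kernel is $\pi_1(\Sigma_{g-1,n+1}^p)$, exactly the surface in the hypothesis, so Lemma~\ref{lemma:fillpuncture} applies directly and no monotonicity step is needed. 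Since $\Gamma_S$ surjects onto $\HHH_1$ of both $\Gamma_\eta$ and $\Gamma_{\eta'}$, their images in $\HH_1(\Gamma;\Q)$ coincide.
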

\begin{proof}
Lemma \ref{lemma:transfer} implies that the vertical maps in the commutative diagram
$$\begin{CD}
\HH_1((\Gamma \cap \Mod_{g,n}^p(3))_{\gamma};\Q) @>>> \HH_1(\Gamma \cap \Mod_{g,n}^p(3);\Q) \\
@VVV                                             @VVV \\
\HH_1(\Gamma_{\gamma};\Q)                   @>>> \HH_1(\Gamma;\Q),
\end{CD}$$
are surjective.  It is therefore enough to show that the map 
$$\HH_1((\Gamma \cap \Mod_{g,n}^p(3))_{\gamma};\Q) \longrightarrow \HH_1(\Gamma \cap \Mod_{g,n}^p(3);\Q)$$
is surjective.  Replacing $\Gamma$ with $\Gamma \cap \Mod_{g,n}^p(3)$, we can thus assume without loss of
generality that $\Gamma < \Mod_{g,n}^p(3)$.  The proof will have four steps.  For the first two,
recall that $\Curves_{g,n}^p$ is the curve complex and $\CNosep_{g,n}^p$ is the nonseparating curve complex.

\begin{remark}
One could combine the proofs of Steps 1 and 2 by appealing to the fact that 
$\HH_i(\CNosep_{g,n}^p / \Gamma;\Q)=0$ for $1 \leq i \leq g-2$, 
which as we said after the statement of Theorem \ref{theorem:boggicurvecpx} can
be extracted from Boggi's work.  We have instead decided to appeal to Theorem \ref{theorem:boggicurvecpx},
which is explicitly proven in Boggi's paper.  This necessitates the short topological argument in Step 2.
\end{remark}

\BeginSteps
\begin{step}
The map
$$\bigoplus_{\gamma \in (\Curves_{g,n}^p)^{(0)}} \HH_1(\Gamma_{\gamma};\Q) \rightarrow \HH_1(\Gamma;\Q)$$
is a surjection.
\end{step}

Since $\Gamma < \Mod_{g,n}^p(3)$, the group $\Gamma$ acts on $\Curves_{g,n}^p$ without rotations.
Since $g \geq 3$, Theorems \ref{theorem:harercurvecpx} and \ref{theorem:boggicurvecpx} say that
$$\HH_1(\Curves_{g,n}^p;\Q) = \HH_1(\Curves_{g,n}^p/\Gamma;\Q) = 0.$$
The desired conclusion thus follows from Theorem \ref{theorem:equivariant}.

\Figure{figure:eliminateseparating}{EliminateSeparating}{a. The genus of $T_2$ is positive, so we can
find a nonseparating curve $\gamma$ in it. \CaptionSpace b. The genus of $T_2$ is $0$, so $T_1$
has genus $g \geq 3$ and strictly fewer than $n+p$ boundary components/punctures.}

\begin{step}
The map
\begin{equation}
\label{eqn:allnosep}
\bigoplus_{\gamma \in (\CNosep_{g,n}^p)^{(0)}} \HH_1(\Gamma_{\gamma};\Q) \rightarrow \HH_1(\Gamma;\Q)
\end{equation}
is a surjection.
\end{step}

Assume as an inductive hypothesis that the claim is true for all smaller values of $n$ and
$p$ (this assumption is vacuous for $n=p=0$).  As notation, for $\phi \in \Gamma$ we will write $[\phi]_{\Gamma}$
for the associated element of $\HH_1(\Gamma;\Q)$.
By Step 1, we must show that for every nontrivial separating curve $\delta$, the image of the map
$\HH_1(\Gamma_{\delta};\Q) \rightarrow \HH_1(\Gamma;\Q)$ is contained in the image of the map
\eqref{eqn:allnosep}.  Consider $\phi \in \Gamma_{\delta}$.  It is enough to show that
$[\phi^k]_{\Gamma}$ is in the image of the map in \eqref{eqn:allnosep} for some $k \geq 1$.

Assume that $\delta$ separates $\Sigma_{g,n}^p$ into subsurfaces $T_1$ and $T_2$.  Replacing
$\phi$ by $\phi^2$ if necessary, we can assume that $\phi$ does not exchange $T_1$
and $T_2$ (up to homotopy).  In other words, we can write $\phi = f_1 \cdot f_2$, 
where $f_i \in (\Mod_{g,n}^p)_{T_i}$ (we will say that $f_i$ is ``supported'' on $T_i$).  Since $\Gamma$ is a 
finite-index subgroup of $\Mod_{g,n}^p$, there exists some
$k \geq 1$ such that $f_i^k \in \Gamma$ for $i=1,2$.  
The mapping classes $f_1$ and $f_2$ commute, so $\phi^k = f_1^k \cdot f_2^k$.  It is therefore enough
to show that $[f_i^k]_{\Gamma}$ is in the image of \eqref{eqn:allnosep} for $i=1,2$.  

By symmetry, it is enough to show this for $i=1$.  There are two cases.
\begin{itemize}
\item The genus of $T_2$ is positive (see Figure \ref{figure:eliminateseparating}.a).  
In this case, we can find some simple closed nonseparating
curve $\gamma$ in $T_2$.  
Since $f_1$ is supported on $T_1$, the homotopy class
of $\gamma$ is fixed by $f_1^k$.  In other words, $f_1^k \in \Gamma_{\gamma}$, so $[f_1^k]_{\Gamma}$ is
in the image of \eqref{eqn:allnosep}.
\item The genus of $T_2$ is zero (see Figure \ref{figure:eliminateseparating}.b).  
In this case, the genus of $T_1$ is $g$, which is at least $3$.  Moreover,
since $\delta$ is not homotopic to a boundary component or puncture, the number of
punctures/boundary components of $T_1$ must be strictly less than $n+p$.  Letting $\CNosep(T_1)$
be the nonseparating curve complex of $T_1$, induction gives that the map
$$\bigoplus_{\gamma \in (\CNosep(T_1))^{(0)}} \HH_1((\Gamma_{T_1})_{\gamma};\Q) \rightarrow \HH_1(\Gamma_{T_1};\Q)$$
is surjective.  In particular, its image must contain the element of $\HH_1(\Gamma_{T_1};\Q)$ corresponding
to $f_1^k$.  This implies that the image of \eqref{eqn:allnosep} also must contain $[f_1^k]_{\Gamma}$.
\end{itemize}

\begin{step}
Let $S$ be an embedded subsurface of $\Sigma_{g,n}^p$ such that $S \cong \Sigma_{g-1,n+1}^p$.  
Let $\gamma$ be a nonseparating curve on $\Sigma_{g,n}^p$ that lies entirely within 
$\Sigma_{g,n}^p \setminus \Interior(S) \cong \Sigma_{1,1}$ (see Figure \ref{figure:stabilizerdiagram}.a).  
Then the natural map
$$\HHH_1(\Gamma_S;\Q) \longrightarrow \HHH_1(\Gamma_{\gamma};\Q)$$
is a surjection.
\end{step}

\Figure{figure:stabilizerdiagram}{StabilizerDiagram}{a. $S$ is a subsurface of $\Sigma_{g,n}^p$ such 
that $S \cong \Sigma_{g-1,n+1}^p$ and $\gamma$ is a nonseparating curve on $\Sigma_{g,n}^p$ that lies entirely
within $\Sigma_{g,n}^p \setminus \Interior(S) \cong \Sigma_{1,1}$.
\CaptionSpace b. The complement $X$ of a regular neighborhood of $\gamma$.
\CaptionSpace c. The result $X'$ of gluing a punctured disc to $X$.
\CaptionSpace d. $S_i$ is the complement of a regular neighborhood of $\eta_i \cup \eta_{i+1}$.}

We make the following definitions.
\begin{itemize}
\item Let $X$ be the complement of a regular neighborhood of $\gamma \subset \Sigma_{g,n}^p$ (see
Figure \ref{figure:stabilizerdiagram}.b), so $X \cong \Sigma_{g-1,n+2}^p$.
\item Let $X'$ be the result of gluing a punctured disc to one of the two boundary components
of $X$ that are not boundary components of $\Sigma_{g,n}^p$ (see Figure \ref{figure:stabilizerdiagram}.c), 
so $X' \cong \Sigma_{g-1,n+1}^{p+1}$.
\item Let $X''$ be the result of filling in the puncture on the glued-on disc in $X'$, so
$X'' \cong \Sigma_{g-1,n+1}^p$.
\end{itemize}
We have an embedding $S \hookrightarrow \Sigma_{g,n}^p$.  As
shown in Figures \ref{figure:stabilizerdiagram}.a--c, we can arrange for there
to exist embeddings $S \hookrightarrow X$
and $S \hookrightarrow X'$ and $S \hookrightarrow X''$ such that the diagram
$$\xymatrix{
\Sigma_{g,n}^p & X \ar[l] \ar[r] & X' \ar[r] & X''\\
S \ar[u] \ar[ur] \ar[urr] \ar[urrr] & & &}$$
commutes.  There is an induced commutative diagram
$$\xymatrix{
(\Mod_{g,n}^p)_{\gamma} & \Mod(X) \ar[l] \ar[r] & \Mod(X') \ar[r] & \Mod(X'')\\
\Mod(S) \ar[u] \ar[ur] \ar[urr] \ar[urrr] & & &}$$
of mapping class groups.  Define $\Gamma_X \subset \Mod(X)$ to be the pullback
of $\Gamma_{\gamma} \subset (\Mod_{g,n}^p)_{\gamma}$ to $\Mod(X)$.  Also, define $\Gamma_X'$
to be the image of $\Gamma_X$ in $\Mod(X')$ and $\Gamma_X''$ to be the image of
$\Gamma_X'$ in $\Mod(X'')$.  We then have a commutative diagram
$$\xymatrix{
\Gamma_{\gamma} & \Gamma_X \ar[l] \ar[r] & \Gamma_X' \ar[r] & \Gamma_X''\\
\Gamma_S \ar[u] \ar[ur] \ar[urr] \ar[urrr] & & &}$$
Passing to first homology, we have a commutative diagram
$$\xymatrix{
\HHH_1(\Gamma_{\gamma};\Q) & \HHH_1(\Gamma_X;\Q) \ar[l] \ar[r] & \HHH_1(\Gamma_X';\Q) \ar[r] & \HHH_1(\Gamma_X'';\Q)\\
\HH_1(\Gamma_S;\Q) \ar[u] \ar[ur] \ar[urr] \ar[urrr] & & &}$$
Our goal is to show that the map $\HH_1(\Gamma_S;\Q) \rightarrow \HHH_1(\Gamma_{\gamma};\Q)$ is a surjection.
To do this, it is enough to show that all the maps on the first row of this commutative diagram are
isomorphisms and that the map $\HH_1(\Gamma_S;\Q) \rightarrow \HHH_1(\Gamma_X'';\Q)$ is a surjection.  We
deal with each of these claims in turn.
\BeginClaims
\begin{claim}
The map $\HHH_1(\Gamma_X;\Q) \rightarrow \HHH_1(\Gamma_{\gamma};\Q)$ is an isomorphism.
\end{claim}

Restricting exact sequence \eqref{eqn:complementscc} to $\Gamma_X$, we obtain
a short exact sequence
$$1 \longrightarrow \Z \longrightarrow \Gamma_X \longrightarrow \Gamma_{\gamma} \longrightarrow 1.$$
Here $\Z$ is generated by $(T_x T_y^{-1})^k = T_x^k T_y^{-k}$, where $x$ and $y$ are the boundary components
of $\Sigma_{g,n}^b \setminus \Interior(X)$ and $k \geq 1$.  Since the kernel $\Z$ is generated
by a multitwist, we conclude that the map $\HHH_1(\Gamma_X;\Q) \rightarrow \HHH_1(\Gamma_{\gamma};\Q)$ 
is an isomorphism.

\begin{claim}
The map $\HHH_1(\Gamma_{X};\Q) \rightarrow \HHH_1(\Gamma_X';\Q)$ is an isomorphism.
\end{claim}

Restricting exact sequence \eqref{eqn:boundarytopuncture} to $\Gamma_X$, we obtain
a short exact sequence
$$1 \longrightarrow \Z \longrightarrow \Gamma_X \longrightarrow \Gamma_X' \longrightarrow 1.$$
Here $\Z$ is generated by $T_{\beta}^l$, where $\beta$ is the boundary component of $X$ to
which we are gluing a punctured disc to obtain $X'$ and $l \geq 1$.  Since the kernel $\Z$ is generated
by a multitwist, we conclude that the map $\HHH_1(\Gamma_X;\Q) \rightarrow \HHH_1(\Gamma_{X}';\Q)$ 
is an isomorphism.

\begin{claim}
The map $\HHH_1(\Gamma_X';\Q) \rightarrow \HHH_1(\Gamma_X'';\Q)$ is an isomorphism.
\end{claim}

We have a Birman exact sequence
\begin{equation}
\label{eqn:modbirman}
1 \longrightarrow \pi_1(\Sigma_{g-1,n+1}^p) \longrightarrow \Mod(X') \longrightarrow \Mod(X'') \longrightarrow 1.
\end{equation}
Restricting this to $\Gamma_X'$, we get a short exact sequence
\begin{equation}
\label{eqn:gammaxbirman}
1 \longrightarrow \pi_1(\Sigma_{g-1,n+1}^p) \cap \Gamma_X' \longrightarrow \Gamma_X' \longrightarrow \Gamma_X'' \longrightarrow 1.
\end{equation}
Lemma \ref{lemma:fillpuncture} says that the map 
$\HH_1(\pi_1(\Sigma_{g-1,n+1}^p) \cap \Gamma_X';\Q) \rightarrow \HHH_1(\Gamma_X';\Q)$ is the zero map, so
we conclude that the map $\HHH_1(\Gamma_X';\Q) \rightarrow \HHH_1(\Gamma_X'';\Q)$ is an isomorphism.

\begin{claim}
The map $\HH_1(\Gamma_S;\Q) \rightarrow \HHH_1(\Gamma_X'';\Q)$ is a surjection.
\end{claim}

Since the map $\Mod(S) \rightarrow \Mod(X'')$ is an isomorphism, one might first think that
the map $\Gamma_S \rightarrow \Gamma_X''$ is an isomorphism.  However, this need not hold.  Indeed,
composing the isomorphism $\Mod(X'') \rightarrow \Mod(S)$ with the map $\Mod(S) \rightarrow \Mod(X')$
gives a splitting of exact sequence \eqref{eqn:modbirman}.  If the map $\Gamma_S \rightarrow \Gamma_X''$
were an isomorphism, then in a similar way we would obtain a splitting of exact sequence
\eqref{eqn:gammaxbirman}, which need not split.  However, chasing through the definitions, we
see that the image of $\Gamma_S$ in $\Gamma_X''$ is a finite-index subgroup, so Lemma \ref{lemma:transfer}
implies that the map $\HH_1(\Gamma_S;\Q) \rightarrow \HHH_1(\Gamma_X'';\Q)$ is a surjection.

\begin{step}
Let $\gamma$ be a nonseparating curve on $\Sigma_{g,n}^p$.  Then the natural map
$\HH_1(\Gamma_{\gamma};\Q) \rightarrow \HH_1(\Gamma;\Q)$ is a surjection.
\end{step}

Let $\gamma'$ be another nonseparating curve on $\Sigma_{g,n}^p$.  By Step 2, it is enough to show that the images
of the maps
\begin{equation}
\label{eqn:gammamaps}
\HH_1(\Gamma_{\gamma};\Q) \rightarrow \HH_1(\Gamma;\Q) \quad \text{and} \quad \HH_1(\Gamma_{\gamma'};\Q) \rightarrow \HH_1(\Gamma;\Q)
\end{equation}
are the same.  By Lemma \ref{lemma:connectcurves}, there is a sequence $\eta_1,\ldots,\eta_k$ of simple closed
nonseparating curves on $\Sigma_{g,n}^p$ such that $\gamma = \eta_1$, such that $\gamma' = \eta_k$, and such that
$\eta_i$ and $\eta_{i+1}$ intersect exactly 
once for $1 \leq i < k$.  To show that the images of the maps in \eqref{eqn:gammamaps} are the same,
it is thus enough to show that the images of the maps
\begin{equation}
\label{eqn:etamaps}
\HH_1(\Gamma_{\eta_i};\Q) \rightarrow \HH_1(\Gamma;\Q) \quad \text{and} \quad \HH_1(\Gamma_{\eta_{i+1}};\Q) \rightarrow \HH_1(\Gamma;\Q)
\end{equation}
are the same for $1 \leq i < k$.  Since $g \geq 3$, Corollary \ref{corollary:multitwistsvanish} implies that
these two maps factor through $\HHH_1(\Gamma_{\eta_i};\Q)$ and $\HHH_1(\Gamma_{\eta_{i+1}};\Q)$, respectively.
Let $S_i$ be the complement of a 
regular neighborhood of $\eta_i \cup \eta_{i+1}$ (see Figure \ref{figure:stabilizerdiagram}.d).  We thus have
$S_i \cong \Sigma_{g-1,n+1}^p$ and 
$\eta_i, \eta_{i+1} \subset \Sigma_{g,n}^p \setminus \Interior(S_i) \cong \Sigma_{1,1}$.
We have a commutative diagram
$$\begin{CD}
\Gamma_{S_i} @>>> \Gamma_{\eta_i} \\
@VVV              @VVV\\
\Gamma_{\eta_{i+1}} @>>> \Gamma
\end{CD}$$
Step 3 says that the maps $\HHH_1(\Gamma_{S_i};\Q) \rightarrow \HHH_1(\Gamma_{\eta_i};\Q)$ and
$\HHH_1(\Gamma_{S_i};\Q) \rightarrow \HHH_1(\Gamma_{\eta_{i+1}};\Q)$ are surjections, so we conclude
that the maps in \eqref{eqn:etamaps} have the same image, as desired.
\end{proof}

\subsection{The proof}
\label{section:theproof}

We finally are in a position to prove that Conjecture \ref{conjecture:actionconjecture} implies
Conjecture \ref{conjecture:ivanov}.

\begin{theorem}
\label{theorem:actionimpliesivanov}
Fix $g \geq 3$ and $n,p \geq 0$.  
Assume that Conjecture \ref{conjecture:actionconjecture} holds for $\Sigma_{g-1,n+1}^p$.  
Let $\Gamma < \Mod_{g,n}^p$ be a finite-index subgroup.  Then $\HH_1(\Gamma;\Q)=0$.
\end{theorem}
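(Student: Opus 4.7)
The plan is to derive this vanishing from Lemma~\ref{lemma:curvestabsurject} together with Step~3 of its proof, closing out with an additional argument that exploits the hat-group machinery developed in \S\ref{section:bigdirection}.

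First, I would invoke Lemma~\ref{lemma:curvestabsurject}, whose hypotheses ($g\ge 3$ together with Conjecture~\ref{conjecture:actionconjecture} for $\Sigma_{g-1,n+1}^p$) match ours exactly, to obtain for any nonseparating simple closed curve $\gamma\subset\Sigma_{g,n}^p$ a surjection $\HH_1(\Gamma_\gamma;\Q)\twoheadrightarrow \HH_1(\Gamma;\Q)$. It therefore suffices to prove that this surjection is the zero map.

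Choose $\gamma$ inside the complementary one-holed torus of an embedded subsurface $S\cong\Sigma_{g-1,n+1}^p$ of $\Sigma_{g,n}^p$. Step~3 of the proof of Lemma~\ref{lemma:curvestabsurject} (which itself invokes Lemma~\ref{lemma:fillpuncture}, and is where the assumed Conjecture~\ref{conjecture:actionconjecture} for $\Sigma_{g-1,n+1}^p$ enters) yields a surjection $\HHH_1(\Gamma_S;\Q)\twoheadrightarrow \HHH_1(\Gamma_\gamma;\Q)$. Since $g\ge 3$, Corollary~\ref{corollary:multitwistsvanish} identifies $\HHH_1(\Gamma_\gamma;\Q)=\HH_1(\Gamma_\gamma;\Q)$; composing with the natural quotient $\HH_1(\Gamma_S;\Q)\twoheadrightarrow\HHH_1(\Gamma_S;\Q)$ then provides a surjection $\HH_1(\Gamma_S;\Q)\twoheadrightarrow \HH_1(\Gamma;\Q)$ induced by the subgroup inclusion $\Gamma_S\hookrightarrow\Gamma$. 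The problem reduces to showing this inclusion-induced map is zero.

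The main obstacle is this last step. My approach is to further process $\Gamma_S$, which is a finite-index subgroup of $\Mod(S)=\Mod_{g-1,n+1}^p$: using that Conjecture~\ref{conjecture:actionconjecture} for $\Sigma_{g-1,n+1}^p$ implies the conjecture for every $\Sigma_{g-1,n'}^{p'}$ with $n'\le n+1$ and $p'\le p$ (by the monotonicity remark following Theorem~\ref{theorem:main}), iterated applications of Lemma~\ref{lemma:fillpuncture} and of exact sequence~\eqref{eqn:boundarytopuncture} should identify $\HHH_1(\Gamma_S;\Q)$ with $\HHH_1$ of a finite-index subgroup of the closed-surface mapping class group $\Mod_{g-1}$; if this final group is shown to be zero, then $\HH_1(\Gamma_S;\Q)$ is spanned by images of multitwists of $\Gamma_S\subset\Gamma$, each of which vanishes in $\HH_1(\Gamma;\Q)$ by Corollary~\ref{corollary:multitwistsvanish}, and we win. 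The hard part is dispatching this residual $\HHH_1(\Mod_{g-1}\text{-subgroup};\Q)$: for $g\ge 4$ it could in principle be handled by induction on $g$ using $\Ivanov(g-1)$, but the base case $g=3$ genuinely requires a more delicate argument because $\Ivanov(2)$ is known to fail, so one must exploit the specific way $S$ sits adjacent to the complementary one-holed torus in $\Sigma_{g,n}^p$ and combine this with the Dehn twist vanishing of Theorem~\ref{theorem:dehntwistsvanish}.
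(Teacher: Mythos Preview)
Your approach has a genuine gap at exactly the point you flag as ``the hard part.'' After reducing to showing that the composite $\HH_1(\Gamma_S;\Q)\to\HH_1(\Gamma;\Q)$ vanishes, you try to prove $\HHH_1(\Gamma_S;\Q)=0$ by stripping $S\cong\Sigma_{g-1,n+1}^p$ down to a closed genus~$g-1$ surface. But nothing in the hypotheses gives you control over $\HHH_1$ of a finite-index subgroup of $\Mod_{g-1}$: the assumption is only $\Action(g-1,n+1,p)$, and the monotonicity remark lets you decrease $n$ and $p$, not the genus, so there is no inductive access to $\Ivanov(g-1)$ even for $g\ge 4$. For $g=3$ the situation is worse, since $\Ivanov(2)$ actually fails and there is no reason to expect $\HHH_1$ of a finite-index subgroup of $\Mod_2$ to vanish. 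The closing sentence about ``exploiting the specific way $S$ sits adjacent to the complementary one-holed torus'' is not an argument.

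The paper avoids this dead end entirely with a short and rather different trick. First pass (via Lemma~\ref{lemma:transfer}) to the case where $\Gamma$ is \emph{normal} in $\Mod_{g,n}^p$. Then Lemma~\ref{lemma:supertransfer} gives $(\HH_1(\Gamma;\Q))_{\Mod_{g,n}^p}\cong\HH_1(\Mod_{g,n}^p;\Q)=0$ by perfectness, so it suffices to show that $\Mod_{g,n}^p$ acts trivially on $\HH_1(\Gamma;\Q)$. Now use your Lemma~\ref{lemma:curvestabsurject} surjection, but not to force the map to be zero: rather, observe that for a nonseparating $\gamma$ the twist $T_\gamma$ is central in $(\Mod_{g,n}^p)_\gamma$, hence conjugation by $T_\gamma$ is trivial on $\Gamma_\gamma$, hence trivial on the image of $\HH_1(\Gamma_\gamma;\Q)$ in $\HH_1(\Gamma;\Q)$, which is everything. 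Since such $T_\gamma$ generate $\Mod_{g,n}^p$, the action is trivial and you are done. The missing idea in your proposal is precisely this normal-subgroup/coinvariants/centrality manoeuvre.
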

\begin{proof}
By Lemma \ref{lemma:transfer},
we can assume without loss of generality that $\Gamma$ is a normal subgroup of $\Mod_{g,n}^p$.
Lemma \ref{lemma:supertransfer} then implies that
$$\HH_1(\Mod_{g,n}^p;\Q) \cong (\HH_1(\Gamma;\Q))_{\Mod_{g,n}^p}.$$
Since $g \geq 3$, the group $\Mod_{g,n}^p$ is perfect, so $\HH_1(\Mod_{g,n}^p;\Q) = 0$.  It is thus
enough to show that $\Mod_{g,n}^p$ acts trivially on $\HH_1(\Gamma;\Q)$.  The group $\Mod_{g,n}^p$ 
is generated by Dehn twists $T_{\gamma}$ about nonseparating curves $\gamma$.  It is thus enough
to show that $T_{\gamma}$ acts trivially on $\HH_1(\Gamma;\Q)$ for $\gamma$ nonseparating.  Clearly
the conjugation action of $T_{\gamma}$ on $\Gamma$ restricts to the identity on the subgroup
$\Gamma_{\gamma} < \Gamma$, so the desired result is an
immediate consequence of Lemma \ref{lemma:curvestabsurject}, which says that the map
$\HH_1(\Gamma_{\gamma};\Q) \rightarrow \HH_1(\Gamma;\Q)$ is surjective for all nonseparating curves $\gamma$.
\end{proof}

\section{A (conditional) construction of finite-index subgroups of the mapping class group that surject onto $\Z$}
\label{section:construction}

We close the paper by proving if Conjecture \ref{conjecture:actionconjecture} is false,
then Conjecture \ref{conjecture:ivanov} is also false.  Assume, therefore, that
$g \geq 2$ and $n \geq 1$ and $p \geq 0$.  Let $K < \pi_1(\Sigma_{g,n}^p)$ be
a counterexample to Conjecture \ref{conjecture:actionconjecture}.  There thus exists
a nonzero vector $v_0 \in V_K$ such that the $\Mod_{g,n}^{p+1}$-orbit of $v_0$ is
finite.  Our goal is to find a finite-index subgroup $\Gamma < \Mod_{g,n}^{p+1}$
such that $\HH_1(\Gamma;\Q) \neq 0$.  

Consider the Birman exact sequence
$$1 \longrightarrow \pi_1(\Sigma_{g,n}^p) \longrightarrow \Mod_{g,n}^{p+1} \longrightarrow \Mod_{g,n}^p \longrightarrow 1.$$
Since $n \geq 1$, this exact sequence splits.  Fixing a splitting, we obtain an isomorphism
\begin{equation}
\label{eqn:semidirect}
\Mod_{g,n}^{p+1} \cong \pi_1(\Sigma_{g,n}^p) \rtimes \Mod_{g,n}^p.
\end{equation}
We remark that the action of $\Mod_{g,n}^p$ on $\pi_1(\Sigma_{g,n}^p)$ in \eqref{eqn:semidirect}
is {\em not} natural (it depends on the choice of splitting).

Let $\rho : \Mod_{g,n}^{p+1} \rightarrow \Aut(V_K)$ be the higher Prym representation.  Since
the orbit of $v_0 \in V_K$ is finite, we can find a finite-index subgroup $G' < \Mod_{g,n}^{p+1}$
such that $\rho(g)(v_0)=v_0$ for all $g \in G'$.  Regarding $\Mod_{g,n}^p$ as a subgroup
of $\Mod_{g,n}^{p+1}$ via the decomposition \eqref{eqn:semidirect}, let $G = G' \cap \Mod_{g,n}^p$.
Since $K < \pi_1(\Sigma_{g,n}^p)$ is characteristic, we can form the finite-index subgroup
$$\Gamma = K \rtimes G < \pi_1(\Sigma_{g,n}^p) \rtimes \Mod_{g,n}^p \cong \Mod_{g,n}^{p+1}.$$
From its semidirect product decomposition, we get that
$$\HH_1(\Gamma;\Q) \cong \HH_1(G;\Q) \oplus (\HH_1(K;\Q))_G.$$
To prove that $\HH_1(\Gamma;\Q) \neq 0$, it is thus enough to prove that $(\HH_1(K;\Q))_G \neq 0$.  Dually,
by Lemma \ref{lemma:selfdualcoinv} 
it is enough to construct a nonzero homomorphism $\psi : \HH_1(K;\Q) \rightarrow \Q$ which is invariant
under the natural $G$-action on $\Hom(\HH_1(K;\Q),\Q)$.

Recall that $V_K = \HH_1(K;\Q)/B$ where $B$ is the boundary subspace of $\HH_1(K;\Q)$.  It follows that
$V_K$ is the first rational homology group of the closed surface that results from taking the finite cover
of $\Sigma_{g,b}^p$ corresponding to $K < \pi_1(\Sigma_{g,b}^p)$ and gluing discs to all of its boundary
components and filling in all of its punctures.  In particular, $V_K$ is a symplectic vector space, i.e.\ it
has a nondegenerate alternating pairing $i : V_K \times V_K \rightarrow \Q$, namely the algebraic intersection
form.  Define $\psi : \HH_1(K;\Q) \rightarrow \Q$ to
be the composition
$$\HH_1(K;\Q) \longrightarrow V_K \stackrel{i(v_0,\cdot)}{\longrightarrow} \Q.$$
Since $i$ is nondegenerate and $v_0 \neq 0$, the map $\psi$ is nonzero.  Also, since $v_0$ is invariant
under $G$, it follows that $\psi$ is invariant under $G$, as desired.

\appendix
\section{Appendix : A counterexample in genus $1$}
\label{appendix:counterexample}

In this appendix, we sketch a counterexample to Conjecture \ref{conjecture:actionconjecture}
in genus $1$.  Let $Q_8 = \{\pm 1, \pm i, \pm j, \pm k\}$ be the $8$-element quaternion group.  We
have $i^2=-1$ and $ij = k$, so $Q_8$ is generated by $i$ and $j$.  It is well-known that $Q_8$
has the presentation
\begin{equation}
\label{eqn:qpres}
Q_8 = \langle \text{$i,j$ $|$ $i^4=1$, $i^2=j^2$, $j^{-1} i j = i^{-1}$} \rangle.
\end{equation}
There is a surjection $\rho : \pi_1(\Sigma_1^1) \rightarrow Q_8$ taking a free basis
of $\pi_1(\Sigma_1^1)$ to $i$ and $j$.  Let
$K = \ker(\rho)$.  By computing the action of the usual generators for $\Aut(F_2)$ on
the normal generators for $K$ given in \eqref{eqn:qpres}, one can check that
$K$ is characteristic in $\pi_1(\Sigma_1^1)$.  We
will prove that the action of $\Mod_1^2$ on $V_K$ has nontrivial finite orbits.  In fact,
it is a little easier to deal with a subgroup of $\Mod_1^2$.
Let $\Gamma \subset \Mod_1^2$ be the kernel of the natural map $\Mod_1^2 \rightarrow \Aut(Q_8)$.  Since
$\Gamma$ is a finite-index subgroup of $\Mod_1^2$, to prove
that the action of $\Mod_1^2$ on $V_K$ has nontrivial finite orbits, it is enough to show that
the action of $\Gamma$ on $V_K$ has nontrivial finite orbits.

\paragraph{Representation theory of the quaternion group.}
We first review the representation theory of $Q_8$.  The group $Q_8$ is an 
extension of its abelianization $(\Z/2)^2$ by its center $\Z/2$:
$$1 \longrightarrow \Z/2 \longrightarrow Q_8 \longrightarrow (\Z/2)^2 \longrightarrow 1.$$
Over $\Q$, the following
are all the irreducible representations of $Q_8$ (see \cite[Exercise 12.3]{SerreRep}).
\begin{itemize}
\item There are four $1$-dimensional irreducible representations that factor
through the abelianization $(\Z/2)^2$.
\item Regarding the rational quaternions as a $4$-dimensional vector space $\mathbb{H}_{\Q}$ over $\Q$,
the group $Q_8$ acts on $\mathbb{H}_{\Q}$ by left multiplication.  This makes $\mathbb{H}_{\Q}$ 
into an irreducible representation of $Q_8$.
\end{itemize}
By Schur's lemma, the endomorphism ring $\End(\mathbb{H}_{\Q})$ is a division ring over $\Q$.  In fact,
it is an easy exercise to see that $\End(\mathbb{H}_{\Q}) = \mathbb{H}_{\Q}$, where 
$\mathbb{H}_{\Q}$ acts on itself on the right.

\paragraph{Decomposing $V_K$.}
The cover of $\Sigma_1^1$ corresponding to $K$ is a genus $3$ surface with $4$ punctures.  By
definition, we have $V_K \cong \HH_1(\Sigma_3;\Q)$.  The group $Q_8$ acts on $V_K$, and we claim
that as a $Q_8$-representation we have
$V_K = \Q^2 \oplus \mathbb{H}_{\Q}$;
here $\Q^2$ is a two-dimensional trivial representation of $Q_8$.  First, there is a $Q_8$-equivariant
map $\rho : V_K \rightarrow \Q^2$ induced by the branched cover $\Sigma_3 \rightarrow \Sigma_1$
which comes from filling in the punctures in the cover $\Sigma_3^4 \rightarrow \Sigma_1^1$ corresponding
to $K$.  Lemma \ref{lemma:transfer} says that the map $\HH_1(\Sigma_3^4;\Q) \rightarrow \HH_1(\Sigma_1^1;\Q)$
is surjective, which implies that $\rho$ is surjective.  Hence $V_K = \Q^2 \oplus W$ 
for some $4$-dimensional representation
$W$ of $Q_8$.  Other than the identity, no finite-order orientation-preserving homeomorphism of a surface of genus at least $2$ can
act trivially on homology (see the proof of \cite[Theorem 6.8]{FarbMargalitBook}, where this is deduced
from the Lefschetz fixed point theorem).  This implies that the center of $Q_8$ acts nontrivially on $V_K$,
so the action of $Q_8$ on $W$ cannot factor through $(\Z/2)^2$.  We conclude that $W = \mathbb{H}_{\Q}$.

\paragraph{Finite orbits.}
The actions of $Q_8$ and $\Gamma$ on $V_K \cong \HH_1(\Sigma_3;\Q)$ come from homomorphisms 
$i : Q_8 \rightarrow \Sp_6(\Q)$ and $j : \Gamma \rightarrow \Sp_6(\Q)$ whose images lie in $\Sp_{6}(\Z)$.  
Let $G \subset \Sp_6(\Q)$ be the centralizer of $i(Q_8)$, so $G$ preserves the decomposition $V_K = \Q^2 \oplus \mathbb{H}_{\Q}$.  
The action of $\Gamma$ on $V_K$ commutes with the action of $Q_8$, so $j(\Gamma) \subset G \cap \Sp_{6}(\Z)$ (this is where
we use the fact that $\Gamma$ acts trivially on $Q_8$). 
We therefore get actions of $\Gamma$ on the subrepresentations $\Q^2$ and $\mathbb{H}_{\Q}$ of $V_K$.  

The action of $\Gamma$ on $\Q^2$ can be identified with the action on $\HH_1(\Sigma_1^1;\Q)$ arising
from the composition
$$\Gamma \hookrightarrow \Mod_1^2 \longrightarrow \Mod_1^1 \longrightarrow \Sp_{2}(\Q) = \SL_2(\Q);$$
here the second map arises from filling in the puncture of $\Sigma_1^2$ which corresponds
to the basepoint of $\pi_1(\Sigma_1^1)$.
The action of $\Gamma$ on the subrepresentation $\mathbb{H}_{\Q}$ of $V_K$ yields
a homomorphism $\psi : \Gamma \rightarrow \End(\mathbb{H}_{\Q}) = \mathbb{H}_{\Q}$.  Since
every symplectic matrix has determinant $1$, the image $\psi(\Gamma)$ lies in the group
of unit quaternions.  Also, $\psi$ fits into a commutative
diagram
$$\xymatrix{
\Gamma \ar[r]^-j \ar@/^3pc/[rr]^-{\psi} & G \cap \Sp_6(\Z) \ar[r] \ar@{^{(}->}[d] & \End(\mathbb{H}_{\Q}) = \mathbb{H}_{\Q} \\
                   & G \ar[ru] &}$$
This implies that $\psi(\Gamma)$ is a discrete
subgroup of the unit quaternions, so it must be finite.  In other words, the action
of $\Gamma$ on $\mathbb{H}_{\Q} \subset V_K$ factors through a finite group, so all of its orbits are finite.

\noindent
\begin{tabular*}{\linewidth}[t]{@{}p{\widthof{Department of Mathematics 253-37}+1in}@{}p{\linewidth - \widthof{Department of Mathematics 253-37}-1in}@{}}
{\raggedright
Andrew Putman\\
Department of Mathematics\\
Rice University, MS 136 \\
6100 Main St.\\
Houston, TX 77005\\
E-mail: {\tt andyp@rice.edu}}
&
{\raggedright
Ben Wieland\\
1729 Main St.\\
Providence, RI. 02906\\
E-mail: {\tt bwieland@gmail.com}}
\end{tabular*}

\end{document}